\newcommand{\beq}[1]{\begin{equation} \label{#1}}
\newcommand{\eeq}{\end{equation}}
\newcommand{\bed}{\begin{displaymath}}
\newcommand{\eed}{\end{displaymath}}
\makeatletter \@addtoreset{equation}{section}
\newcommand{\bea}{\begin{displaymath}\begin{array}{rl}}
\newcommand{\eea}{\end{array}\end{displaymath}}
\newcommand{\ad}{&\!\!\!\disp}
\newcommand{\aad}{&\disp}
\newcommand{\barray}{\begin{array}{ll}}
\newcommand{\earray}{\end{array}}
\newcommand{\lbar}{\overline}
\newcommand{\F}{{\mathfrak F}}
\def\op{{\mathcal L}}
\def\cd{(\cdot)}
\def\M{{\mathcal M}}
\def\rr{{\mathbb R}}
\newcommand{\pr}{{\mathbb P}}
\newcommand{\Q}{\mathbb Q}
\newcommand{\ex}{{\mathbb E}}
\newcommand{\e}{\varepsilon}
\newcommand{\vphi}{{\varphi}}
\newcommand{\al}{\alpha}
\newcommand{\ga}{\gamma}
\newcommand{\sg}{\sigma}
\newcommand{\wdt}{\widetilde}
\newcommand{\wdh}{\widehat}
\newcommand{\wrt}{{with respect to }}
\def\({\left(}
\def\){\right)}
\def\l{\left|}
\def\r{\right|}
\newcommand{\nd}{\noindent}
\def\tr{\hbox{tr}}
\newcommand{\disp}{\displaystyle}
\def\one{{\hbox{1{\kern -0.35em}1}}}
\newcommand{\set}[1]{\left\{#1\right\}}
\newcommand{\abs}[1]{\left\vert #1\right\vert}
\renewcommand{\tilde}{\wdt}
\newcommand{\ph}{\varphi}
\newcommand{\Dl}{\Delta}
\newcommand{\lf}{\lfloor}
\newcommand{\rf}{\rfloor}
\newcommand{\sumd}{\sum^{\lf s/\Dl^\e \rf-1}_{l=0}}
\newcommand{\intl}{\int^{t+l\Dl^\e+\Dl^\e}_{t+l\Dl^\e}}
\newcommand{\ett}{{\ex^\e_{t+l\Dl^\e}}}
\newtheorem{thm}{Theorem}[section]
\newtheorem{prop}
{Proposition}[section]
\newtheorem{lem}
{Lemma}[section]
\newtheorem{cor}
{Corollary}[section]
\theoremstyle{definition}
\newtheorem{rem}
{Remark}[section]
{Definition}[section]
\newtheorem{exm}
{Example}[section]
\newcommand{\thmref}[1]{Theorem~{\rm \ref{#1}}}
\newcommand{\lemref}[1]{Lemma~{\rm \ref{#1}}}
\begin{document}
\title{Feynman-Kac Formulas for Regime-Switching Jump Diffusions and their Applications
}
\author{Chao Zhu,\thanks{Department of Mathematical
Sciences, University of Wisconsin-Milwaukee, Milwaukee, WI 53201,
{\tt zhu@uwm.edu}. 
} \and G. Yin,\thanks{Department of Mathematics, Wayne State University, Detroit, Michigan 48202, {\tt gyin@math.wayne.edu}. 
}
\and
Nicholas A. Baran\thanks{Department of Mathematics, Wayne State
University,  Detroit, MI 48202, U.S.A., {\tt av6658@wayne.edu}.
} }

\maketitle

\begin{abstract}
This work develops Feynman-Kac formulas for a class of regime-switching jump
diffusion processes, in which the jump part is driven by a Poisson random measure associated to a  general L\'evy process
and the switching part depends on the jump diffusion processes. Under broad conditions,
the connections of such stochastic processes and the corresponding partial
integro-differential equations are established.  Related
initial,   terminal,  and boundary value
problems are also treated. Moreover, based on weak convergence of probability measures,
it is demonstrated that
 a sequence of random variables related to the regime-switching jump diffusion process
 converges in distribution to the arcsine law.

\bigskip
\nd{\bf Key words.} Feynman-Kac formula, partial integro-partial differential equation, arcsine law.

\bigskip
\nd{\bf Mathematics Subject Classification.} 60J60, 60J75, 47D08.

\end{abstract}


\setlength{\baselineskip}{0.20in}
\section{Introduction}\label{sect-Intro}

The Feynman-Kac formula establishes natural connections between  partial differential equations (PDEs) and stochastic  processes. For instance, a simple version of the Feynman-Kac formula
 \cite[Section V.3]{IkedaW-89} indicates that for any bounded functions $f, g: \rr \mapsto \rr$ and
any bounded solution $u(t,x)$ of the initial value problem
\begin{equation}
\label{eq-heat-initial}
  \frac{\partial }{\partial t}u(t,x) = \frac{1}{2}\frac{\partial^{2}}{\partial x^{2}} u(t,x) - f(x)  u(t,x), \quad u(0,x) =g(x),
\end{equation} there is a stochastic representation \begin{equation}
\label{eq-stoch-repre-bm}
  u(t,x) = \ex\left[ g(x+ W(t)) \exp\(-\int_{0}^{t} f(x+ W(s))ds\)  \right],
\end{equation} where $W$ is a one-dimensional standard Brownian motion with $W(0)=0$ a.s. Conversely, if we
 define $u$ to be the right-hand side of \eqref{eq-stoch-repre-bm},
 then under some mild regularity conditions on the functions $f$ and $g$, we can show that $u$ is a classical solution to \eqref{eq-heat-initial}.
 First, the Feynman-Kac formula offers a method of solving certain PDEs by simulating   paths of the underlying stochastic processes.
  In addition, a class of expectations of random processes can be computed by  solving the related PDEs. For example, the classical Black-Scholes-Merton PDE can help to determine the arbitrage free price for  European call options \cite[Section 5.8]{Karatzas-S}.

Since the early work of Feynman \cite{feynman1948space} and Kac \cite{Kac-49}, the Feynman-Kac formulas have been extended and generalized in different  directions.
The Feynman-Kac formula for general multi-dimensional diffusion processes can be found in, for instance,
\cite[Section 5.7]{Karatzas-S}; see also
\cite{MPRZ} for Feynman-Kac representation formula for variational inequalities, and
 \cite[Section 12.2]{Cont-Tankov} and \cite{nualart2001bsde,rong1997BSDE-jump} for
several versions of Feynman-Kac formulas for jump diffusions.
Numerous  applications have been found; see,
for example,
\cite{cont2005,Ekstrom-Tysk-10,Janson-Tysk-06} (finance),
 \cite{band2011dna,Majumdar} (DNA breathing dynamics,  physics, and computer science),
  and \cite{DeM04} (statistical physics, biology, and engineering problems).
 Using switching diffusion models,
  a recent work \cite{Liu14} incorporated continuous-state dependent switching in optimal stopping with applications to
   perpetual American put options. This effort may be extended with the use of switching jump diffusion models,
which opens up possible considerations of
   the Feynman-Kac representation for related problems.

Applications
demand the treatment of regime-switching  diffusions with Poisson type jumps. In many real-world applications, the systems often display discontinuous paths as well as structural changes.
Consider, for instance, asset price modeling, in which the commonly used jump diffusion models
\cite{Cont-Tankov}
do not consider the qualitative changes of the volatility, while the regime-switching
Black-Scholes models \cite{Elliott,Zhang}
unrealistically assume the continuity of the price evolution.
In contrast to the references above,
regime-switching jump diffusion processes
can naturally capture
the features of jump discontinuity as well as random environment changes of the underlying systems.
The Poisson jumps and more general L\'evy jumps are well-known to incorporate both small and big jumps \cite{APPLEBAUM};
while the regime-switching mechanisms provide the structural changes of the systems \cite{MaoY,YZ-10}.
Thus, regime-switching diffusion  with L\'evy jumps
provides a   uniform and realistic
 platform for modeling in a wide range of applications.
Moreover, as we have seen in  \cite{YZ-10},
adding a switching processes in the modeling is not a simple or trivial extension of the
standard models in the literature.

This work aims to develop
the Feynman-Kac formulas and to
establish  connections between
a class of coupled systems of partial  integro-differential   equations  and regime-switching jump diffusion processes.  We will establish three versions of the Feynman-Kac formula (Theorems \ref{thm-feyman1}, \ref{thm-FK-terminal}, and \ref{thm-fk-bdd-region}), corresponding to initial and terminal value Cauchy problems and boundary value problem, respectively.
To the best of our knowledge,
such results are not available in the literature.
The proofs of these results
make essential use of the generalized It\^{o} formula \eqref{eq:ito} and the optional sampling theorem for martingales \cite[Theorem 1.3.22]{Karatzas-S},
and  require careful analysis in handling the (local) martingale terms;  see for instance the proof of Theorem \ref{thm-feyman1}.
 In particular, in presence of a general  L\'evy measure $\nu$ and the form of our stochastic differential equation (see \eqref{levy-measure} and \eqref{sw-jump-diffusion}), the derivation of the Feynman-Kac formulas is not a trivial extension of the counterparts for diffusion or regime-switching diffusion processes; see Remark \ref{rem32-differences}.
 In this work, we
 provide mild conditions under which the Feynman-Kac formulas for regime-switching jump diffusions are derived  rigorously.

Motivated by Kac's derivation of L\'evy's arcsine law for the occupation time for a one-dimensional Brownian motion using the Feynman-Kac formula as well as a recent result of Khasminskii \cite{Khasm99} on arcsine laws for null-recurrent diffusions, we also derive an arcsine law (Proposition \ref{lim-d-T}) for a class of one-dimensional regime-switching jump diffusion processes, in which the switching component is singularly perturbed with fast switching.
We show that the regime-switching jump diffusion converges weakly to a diffusion process, whose diffusion coefficient is determined by an appropriate average of the diffusion coefficients of the subsystems with respect to the invariant measure of the fast switching component; see  Theorem \ref{ep-lim} for the precise statement.
Moreover, we demonstrate by an example that one in general cannot expect $L_2$ convergence corresponding to the weak convergence result established in Theorem \ref{ep-lim}.
A similar phenomenon was recently observed in \cite{Liu-10-strong}.
Nevertheless, the weak convergence result, together with \cite{Khasm99}, will help us to derive the desired arcsine law.

The rest of the paper is arranged as follows.
Section \ref{sect-formulation} presents the formulation of the problem that we wish to study
together with some preliminary results.
Section \ref{sect-feykac} concentrates on obtaining the Feynman-Kac formulas. Section \ref{sect-cauchy} presents the Feynman-Kac formulas for Cauchy problems while
Section \ref{sec:1st} is devoted to the Feynman-Kac formula for
a class of Dirichlet problems.  An example on option pricing in incomplete market is provided in Section \ref{sect-feykac} to demonstrate the utility of our result.
Section \ref{sec:arc} deals with arcsine laws related to the processes of interests.
First,
based on two-time-scale formulation, we examine a system in which the switching component  is fast varying.
This enables us to obtain a limiting  diffusion process  in the sense of weak convergence, which, in turn, helps us to obtain the desired
arcsine law.
Finally, we conclude the paper with additional remarks in Section \ref{sec:conclu}.

\section{Formulation and Preliminary Results}\label{sect-formulation}
To facilitate the
 presentation, we introduce some notation
 that will be used often in later sections.
   Throughout the paper, we use
   $x'$  to denote the transpose of $x$, and
 $x'y$  or $x\cdot y$ interchangeably to denote the inner product of
   the vectors $x$ and $y$.
     For sufficiently smooth $\phi: \rr^n \to \rr$, $D_{x_i} \phi= \frac{\partial \phi}{\partial x_i}$,
    $D_{x_ix_j} \phi= \frac{\partial^2 \phi}{\partial x_i\partial x_j}$, and we denote by $D_x\phi   =(D_{x_1}\phi, \dots, D_{x_n}\phi)'\in \rr^{n}$ and $D^2_{xx}\phi =(D_{x_ix_j}\phi) \in \rr^{n\times n}$ the gradient and  Hessian of $\phi$, respectively.
    For $k \in \mathbb N$, $C^{k}(\rr^{n})$ is the collection of functions $f: \rr^{n }\mapsto \rr$
    with continuous partial derivatives up to the $k$th order while  $C^{k}_{c} (\rr^{n})$ denotes the space of $C^{k}$ functions with compact support.
 If $B $ is a set, we use $B^o$ and $I_B$ to denote the interior and indicator function of $B$, respectively. Throughout the paper, we adopt the conventions that $\sup \emptyset =-\infty$
 and $\inf \emptyset = + \infty$.

\subsection{Formulation}
Let  $(\Omega, \F, \set{\F_{t}}_{t\ge 0}, \pr)$ be  a filtered probability space   satisfying the usual condition on
which is defined  an $n$-dimensional standard $\F_t$-adapted Brownian motion   $W\cd$.
Let $\set{\psi(t)}$ be an $\F_t$-adapted   L\'evy process with L\'evy measure $\nu(\cdot)$.
Denote  by  $N\cd$
     the corresponding $\F_t$-adapted Poisson random measure defined on $\rr_+ \times \rr^n_0$: $$ N(t,U):= \sum_{0 < s \le t}I_{U}( \Delta \psi_{s} )= \sum_{0< s \le t} I_{U}(\psi(s)- \psi(s-)),$$
     where $t \ge 0$ and $U $ is a Borel subset of $\rr^{n}_{0}=\rr^{n}-\set{0}$.  
 The compensator $\tilde N$ of $N$ is given by $$\tilde N(dt,dy):= N(dt,dy) - \nu(dy)dt.$$
  Assume that $W\cd$ and $N\cd$ are independent and that
$\nu\cd$ is a  
 L\'evy measure so that
 \begin{equation}
\label{levy-measure} \int_{\rr^n_0} (1 \wedge \abs{y}^2)\nu(dy) < \infty,
\end{equation} where  $a_1\wedge a_2 =\min \{a_1, a_2\}$ for  $a_1,a_2 \in \rr $.

 We consider a stochastic differential equation with regime-switching together with L\'evy-type jumps of the form
\begin{equation}\label{sw-jump-diffusion} \begin{aligned}
 dX(t) = & \, b(X(t ),\al(t))dt + \sigma(X(t ),\al(t ))dW(t)  \\
    & \  + \int_{\rr^n_0} \gamma(X(t-),\al(t-),y)\tilde N(dt,dy),
  \ \ t \ge   0,
  \end{aligned} \end{equation}
  with initial conditions
\begin{equation}\label{swjd-initial}
 X(0)=x_0 \in \rr^n, \ \ \al(0) = \al_0\in \M,  \end{equation}
where $b(\cdot,\cdot) : \rr^n \times \M \mapsto \rr^{n}$,
$\sigma (\cdot,\cdot): \rr^n\times \M \mapsto \rr^{n\times n}$, and $ \gamma (\cdot,\cdot,\cdot): \rr^n \times M \times \rr^n_0
\mapsto  \rr^{n}$ are measurable functions, and
 $\al\cd$ is a switching process  with  a finite state space $\M:=\set{1, \dots, m}$ and generator $Q(x)= (q_{ij}(x))\in \rr^{m\times m}$. That is, $\al\cd$ satisfies
\begin{equation}\label{Q-gen}\pr\set{\al(t+ \delta)=j|X(t)=x,
\al(t)=i,X(s),\al(s),s\le t}=\begin{cases}q_{ij}(x)
\delta + o(\delta),&\hbox{ if }\ j\not= i\\
1+ q_{ii}(x)\delta + o(\delta),  &\hbox{ if }\ j=i,
\end{cases}
 \end{equation} as $\delta \downarrow 0$,  where $q_{ij}(x)\ge 0$ for $i,j\in \M$
with $j\not= i$ and $ q_{ii}(x)=-\sum_{j\not= i}q_{ij}(x)<0$
 for each $i\in \M$.

 The evolution of the discrete component
or the switching process $\al\cd$
can be represented by a stochastic integral with respect to a Poisson random
measure; see, for example, \cite{Skorohod-89}. In fact,  for
$x\in \rr^n$ and $i,j
\in \M$ with $j\not=i$, let $\Delta_{ij}(x)$ be
the consecutive  left-closed, right-open
intervals of
the real line, each having length $q_{ij}(x)$. Define a function
$h: \rr^n \times \M \times \rr \mapsto \rr$ by
\begin{equation}\label{h-def} h(x,i,z)=\sum^m_{j=1} (j-i)
I_{\{z\in\Delta_{ij}(x)\}}.
\end{equation}
Then we may write the switching process \eqref{Q-gen} as
a stochastic integral
\begin{equation}\label{eq:jump-mg}
d\al(t) = \int_\rr h(X(t-),\al(t-),z) {N_1}(dt,dy),\end{equation}
where ${N_{1}}(dt,dy)$ is a Poisson random measure  (corresponding to a random point process $\mathfrak p\cd$) with intensity $dt
\times \lambda(dy)$, and $\lambda\cd$
is the Lebesgue measure on $\rr$.  Denote the compensated Poisson random measure of $N_1\cd$
 by $\tilde N_{1}(dt, dy): = N_{1}(dt, dy)-dt\times  \lambda (dy)$. Throughout this paper, we assume that the L\'evy process $\psi\cd$, the random point process $\mathfrak p\cd$,
 and the Brownian motion
$W\cd$ are independent.

 The following condition will be used as our standing assumption throughout the paper.
 \begin{itemize}
  \item[{\rm(A1)}] Assume   that for some positive constant $\kappa$,   we have
\begin{equation}\label{ito-condition}
 \begin{aligned}
& \abs{b(x,i)- b(y,i)}^2 + \abs{\sigma(x,i)- \sigma(y,i)}^2
 \\ & \qquad  \qquad \qquad
 + \int_{\rr^n_0} \abs{\gamma(x,i,z)- \gamma(y,i,z)}^2\nu(dz) \le \kappa \abs{x-y}^2,\\
&
\int_{\rr^n_0} \abs{\ga(x,i,z)}^2 \nu(dz)
 \le \kappa(1+\abs{x}^2),\\
\end{aligned} \end{equation}
 for all $x,y \in \rr^n$  and $i\in \M=\set{1, \dots, m}$, and that
 \begin{equation}
\label{eq-q-bdd}
\sup \set{q_{ij}(x): x \in \rr^{n}, i\not= j \in \M} \le \kappa < \infty.
\end{equation}
\end{itemize}

Under condition (A1),
in view of \cite[Proposition 2.1]{Xi-Yin-11},  for each initial condition $(x_0,\al_0) \in \rr^n \times \M$, the system represented by \eqref{sw-jump-diffusion} and \eqref{Q-gen} (or equivalently, \eqref{sw-jump-diffusion} and \eqref{eq:jump-mg})
has a unique strong solution  $(X\cd,\al\cd)=(X^{x_0,\al_0}\cd,\al^{x_0,\al_0}\cd)$.

\begin{rem}{\rm We note the following facts.
\begin{itemize}
\item[1.] By the  Lipschitz continuity \eqref{ito-condition}, both $b(\cdot,i)$ and $\sg(\cdot,i)$ grow at most linearly.

\item[2.] Because $Q(x)$ depends on $x$,  $(X\cd,\al\cd)$
is a
state-dependent regime-switching jump diffusion. In particular, as in \eqref{Q-gen}, the evolution of the switching component $\al$ depends on the jump diffusion component $X$.
 Equation \eqref{sw-jump-diffusion} shows that the coefficients $b,\sigma$ and $\gamma$ depend on $\alpha$. 
 The $X$ component alone is not necessarily Markovian, but the two-component process $(X,\alpha)$ is.
Note that the model given in  \eqref{sw-jump-diffusion} and \eqref{Q-gen}  is a substantial generalization of the usual Markovian regime-switching jump diffusion.
Indeed, if $Q(x)=Q$, a constant matrix, then $\alpha\cd$ is a Markov chain independent of the Brownian motion $W\cd$ and the  Poisson random measure $N(\cdot,\cdot)$. The formulation then reduces to the commonly used jump diffusion with Markov switching   in the literature.
Treating the regime-switching diffusion counterpart, as demonstrated in \cite{YZ-10},
compared to the usual Markovian regime-switching diffusion
 considered in \cite{MaoY,Zhang},
 the   state-dependent regime-switching   diffusion
  provides a more realistic formulation by allowing the
 the dependence of $\al\cd$ on
  $X\cd$;
  see, for example, \cite{Liu14,YinZ13,YZ-10} and the references therein for applications of such  state-dependent regime-switching diffusion processes  in areas such as mathematical finance, risk management,  ecosystem modeling, etc.
  This paper further includes L\'evy-type jumps,
  adding additional versatility to the model and complexity to the problem.

  \item[3.] In this paper, the jump part or the discontinuity of $X$ is given by the stochastic integral   with respect to the compensated Poisson random measure $\tilde N$.  As demonstrated in \cite[Chapter 6]{APPLEBAUM}, we can  extend our results in relatively straightforward manners to situations where the jump part is given by $\int_{|y| < c } \gamma(X(t-),\al(t-),y)\tilde N(dt, dy) + \int_{|y| \ge c } \zeta(X(t-),\al(t-),y) N(dt, dy)$ for some  $c \in (0,\infty]$ and appropriate functions $\gamma$ and $ \zeta$. But for ease of presentation,  we choose not to pursue such an extension in this paper.
   \end{itemize}}\end{rem}

The generator of $(X\cd,\al\cd)$ is defined as follows. Denote
$$\begin{aligned}{\mathcal D}_\op : =\biggl\{g: \rr^{n}\times \M \mapsto \rr, \text{ so that for each }i \in \M, \text{ we have }g(\cdot, i) \in C^2 \text{ and } \\  \int_{\rr^n_{0}}\abs{g(x+ \gamma(x,i,z),i)-g(x,i)-
      D_x g(x,i)\cdot\gamma(x,i,z) } \nu(dz)  < \infty\biggl\}.\end{aligned}$$
For $g \in \mathcal D_\op$, we define
\begin{equation}\label{op-defn}\begin{aligned}
\op g(x,i) =  &\, D_x g(x,i)\cdot b(x,i) + \frac{1}{2}\tr((\sigma\sigma')(x,i)D_x^2 g(x,i)) + \sum_{j\in \M} q_{ij}(x) [g(x,j)-g(x,i)]
\\  & \ + \int_{\rr^n_{0}}[g(x+ \gamma(x,i,z),i)-g(x,i)-
      D_x g(x,i)\cdot\gamma(x,i,z) ] \nu(dz), \ (x,i)\in \rr^{d}\times \M.
\end{aligned}\end{equation}
Because of the L\'evy measure $\nu\cd$, $\op g (x,i)$ may not be well-defined if the function $g(\cdot, i)$ is only assumed to be in class $C^{2} $ for each $i\in \M$; see Proposition \ref{prop-dynkin},    Remark \ref{rem-DL}, and Example \ref{exm-1d-DL} for some sufficient conditions for $f \in \mathcal D_{\op}$.

\subsection{Preliminary Results}
This section is devoted to some preliminary results.
Similar to diffusions,
for every  
$f  \in \mathcal{D_{L}}$,
a result known as generalized It\^o's lemma
(see \cite{Skorohod-89,Xi-Yin-11,Yin-Xi-10}) reads
\begin{equation}\label{eq:ito}   f(X(t),\al(t)) -f( X(0),\al(0))
 = \int^t_0 {\cal L}f(X(s-),\al(s-)) ds
+ M_1^f(t) + M_2^f(t) +M_3^f(t),  \end{equation} where
$\cal L$ is the operator associated with the
process $(X,\al)$ defined in \eqref{op-defn},
 and
\begin{equation}\label{Mi-def}\barray \ad M_1^f(t)  = \int^t_0  D_x
f(X(s-),\al(s-)) \cdot \sigma(X(s-),\al(s-))
d W(s), \\
\ad M_2^f(t)  =  \int_0^t \int_{\rr} \big[ f( X(s-), \al(s-)+
h(X(s-),\al(s-),z))
-f(X(s-),\al(s-))\big] \tilde N_{1}(ds, dz), \\
\ad M_3^f(t) = \int_0^t \int_{\rr^n_0} \!\left[f(X(s-) + \gamma(X(s-),\al(s-),z),\al(s-))- f(X(s-),\al(s-))\right]\tilde N(ds, dz).
\earray\end{equation} It is well known that $M_{1}^{f}$ is a local martingale and,
using similar arguments as in \cite[Lemma 2.4]{Yin-Xi-10}, $M_{2}^{f}$ is a local martingale.
 Moreover, $M_3^f$
is a martingale if $f$ is bounded.
In addition, we have the following proposition.

\begin{prop}\label{prop-dynkin}  Assume that {\em (A1)} holds
and that
the function $\gamma \cd$ satisfies for all $(x,i) \in \rr^n \times \M$,
\begin{equation}
\label{add-gamma-conds}
\abs{\gamma(x,i,z)} \le \rho_1(x)< \infty  \text{ if } \abs{z} \le 1,  
\end{equation}
where $\rho_1(x)>0$ depends only on $x$.
Let $f: \rr^n \times \M \mapsto \rr$ be such that for each $i \in \M$, $f(\cdot, i) \in C^2(\rr^n)$ and that
\begin{equation}
\label{eq-f-growth-con}
\abs{f(x,i)} \le K (1+ \abs{x}^2), \text{ for all } (x,i) \in \rr^n \times \M.
\end{equation}
Then $f \in \mathcal D_\op$.
\end{prop}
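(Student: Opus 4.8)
The plan is to show that the integral
\[
\int_{\rr^n_0} \bigl| f(x+\gamma(x,i,z),i) - f(x,i) - D_x f(x,i)\cdot \gamma(x,i,z) \bigr|\, \nu(dz)
\]
is finite by splitting the domain of integration into $\{|z| \le 1\}$ and $\{|z| > 1\}$, and estimating each piece using the structure of the L\'evy measure \eqref{levy-measure} together with the growth and regularity hypotheses. The key analytic tool will be a second-order Taylor expansion of $f(\cdot,i) \in C^2(\rr^n)$ on the small-jump region, and a crude first-order bound plus the quadratic growth \eqref{eq-f-growth-con} on the large-jump region.

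First I would treat the region $\{|z| \le 1\}$. Here the bound \eqref{add-gamma-conds} gives $|\gamma(x,i,z)| \le \rho_1(x)$, so $x + t\gamma(x,i,z)$ stays in the closed ball $\bar B(x,\rho_1(x))$ for $t \in [0,1]$. Since $f(\cdot,i)\in C^2$, its Hessian is bounded on that compact ball by some constant $C_1 = C_1(x,i)$, and Taylor's theorem with integral remainder yields
\[
\bigl| f(x+\gamma(x,i,z),i) - f(x,i) - D_x f(x,i)\cdot\gamma(x,i,z) \bigr| \le \tfrac{1}{2} C_1 |\gamma(x,i,z)|^2.
\]
Then $\int_{\{|z|\le 1\}} |\gamma(x,i,z)|^2 \nu(dz) \le \int_{\{|z|\le 1\}} (1\wedge|z|^2)^{-1}|\gamma(x,i,z)|^2 (1\wedge|z|^2)\nu(dz)$ is controlled via (A1): indeed $\int_{\rr^n_0}|\gamma(x,i,z)|^2\nu(dz) \le \kappa(1+|x|^2) < \infty$ by \eqref{ito-condition}, so the small-jump part of the integrand is integrable with a bound of order $C_1(x,i)\,\kappa(1+|x|^2)$.

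Next I would handle $\{|z| > 1\}$, where $\nu(\{|z|>1\}) < \infty$ because $1\wedge|z|^2 = 1$ there and \eqref{levy-measure} holds. On this region I use the triangle inequality to bound the integrand by $|f(x+\gamma(x,i,z),i)| + |f(x,i)| + |D_x f(x,i)|\,|\gamma(x,i,z)|$. The first term is at most $K(1+|x+\gamma(x,i,z)|^2) \le K(1 + 2|x|^2 + 2|\gamma(x,i,z)|^2)$ by \eqref{eq-f-growth-con}, and $\int_{\{|z|>1\}}|\gamma(x,i,z)|^2\nu(dz) < \infty$ again by \eqref{ito-condition}; the $|f(x,i)|$ term contributes a constant times the finite mass $\nu(\{|z|>1\})$; and the gradient term is handled by $|\gamma| \le 1 + |\gamma|^2$, whose $\nu$-integral over $\{|z|>1\}$ is finite. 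Adding the two regions gives finiteness of the integral for every fixed $(x,i)$, which is exactly the defining condition for $f \in \mathcal D_\op$.

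The only mildly delicate point — the main obstacle, such as it is — is to make sure the constants in the small-jump Taylor bound are handled cleanly: $C_1$ depends on $x$ (through the radius $\rho_1(x)$ and the sup of $|D^2_{xx}f(\cdot,i)|$ on $\bar B(x,\rho_1(x))$), but since we only need the integral finite pointwise in $(x,i)$ rather than a uniform-in-$x$ bound, this dependence is harmless. One should also confirm that $z\mapsto \gamma(x,i,z)$ being measurable makes all the integrands measurable, which is immediate from the standing measurability assumptions on $\gamma$ and the continuity of $f(\cdot,i)$ and $D_xf(\cdot,i)$.
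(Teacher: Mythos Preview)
Your proposal is correct and follows essentially the same approach as the paper: split into $\{|z|\le 1\}$ and $\{|z|>1\}$, use the second-order Taylor remainder with the Hessian bounded on the compact ball $\bar B(x,\rho_1(x))$ together with $\int_{\rr^n_0}|\gamma|^2\,\nu(dz)<\infty$ from (A1) on the small-jump region, and on the large-jump region use the triangle inequality, the quadratic growth \eqref{eq-f-growth-con}, and the finiteness of $\nu(\{|z|>1\})$. The only cosmetic difference is that the paper handles the gradient term via $|D_xf(x,i)|\,|\gamma|\le |D_xf(x,i)|^2+|\gamma|^2$ whereas you use $|\gamma|\le 1+|\gamma|^2$; both are equally valid.
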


\begin{proof} 
We need to verify that for all $(x,i)\in \rr^n \times \M$,
\begin{equation}
\label{f-integrable}
\int_{\rr_0^n} \abs{f(x+\gamma(x,i,z),i)-f(x)-D_x f(x,i)\cdot \gamma(x,i,z)} \nu(dz) < \infty.
\end{equation}
To this end, we will treat the cases $\abs{z} \le 1 $ and $|z| > 1$ separately.

Using a Taylor expansion, for $|z| \le 1$,  we have 
$$ f(x+\gamma(x,i,z),i)-f(x,i)-D f(x,i)\cdot \gamma(x,i,z)   =   \frac{1}{2} D^2 f(x+\theta \gamma(x,i,z),i) \gamma(x,i,z)  \cdot\gamma(x,i,z)
$$ where
$\theta \in [0,1]$.
 Equation  \eqref{add-gamma-conds} and the fact that $f(\cdot, i) \in C^2$ imply   that
$\abs{D^2 f(x+ \theta\gamma(x,i,z),i)}  \le  \rho_2 (x) < \infty$
for some $\rho_2(x)>0$. Then it follows from (A1)  that
\begin{displaymath}
\begin{aligned}
\int_{|z| \le 1}& \abs{ f(x+\gamma(x,i,z),i)-f(x)-D f(x,i)\cdot \gamma(x,i,z)   } \nu(dz)
 \\ &  \le \int_{|z| \le 1} \rho_2(x) \abs{\gamma(x,i,z)}^2 \nu(dz) =\rho_2(x)   \int_{|z| \le 1} \abs{\gamma(x,i,z)}^2 \nu(dz) < \infty.
\end{aligned}
\end{displaymath}

Next for $|z| > 1$, by the quadratic growth condition in \eqref{eq-f-growth-con},
\bea
\ad \abs{f(x+\gamma(x,i,z),i)-f(x,i)-D f(x,i)\cdot \gamma(x,i,z) } \\
\aad \le \abs{f(x+ \gamma(x,i,z),i)}+ \abs{f(x,i)} + \abs{Df(x,i)} \abs{\gamma(x,i,z)} \\
\aad \le K(1+ \abs{x+ \gamma(x,i,z)}^2)+ \abs{f(x,i)} + \abs{Df(x,i)}^{2} + \abs{\gamma(x,i,z)}^{2} \\
\aad \le K_1 (1+ \abs{x}^2 + \abs{f(x,i)} +  \abs{Df(x,i)}^{2} +  \abs{\gamma(x,i,z)}^2),
\eea where $K_{1}$ is some positive constant.
Observe from  \eqref{levy-measure}  that \begin{equation}
\label{levy-finite-B^{c}}
\nu(\rr_{0}^{n}- \set{z \in \rr_{0}^{n}: |z| \le 1}) = \nu (\set{z: |z| > 1})< \infty.
\end{equation}
Then it follows from 
 \eqref{levy-finite-B^{c}} and Assumption (A1) that
\begin{displaymath}
\begin{aligned}
& \int_{|z|>1} \abs{f(x+\gamma(x,i,z),i)-f(x,i)-D f(x,i)\cdot \gamma(x,i,z) } \nu(dz) \\
& \  \ \le  K_1    \int_{|z|>1}(1+ \abs{x}^2 + \abs{f(x,i)}+ \abs{Df(x,i)}^{2}+  \abs{\gamma(x,i,z)}^2) \nu(dz) \\
&\ \  = K_{1} \left[ \(1+ \abs{x}^2 + \abs{f(x,i)}+ \abs{Df(x,i)}^{2}\)\nu (\set{z: |z| > 1}) + \int_{|z| > 1} |\gamma(x,i,z)|^{2} \nu(dz) \right]
   < \infty.
\end{aligned}\end{displaymath}
Combining the two cases establishes \eqref{f-integrable}.
\end{proof}

\begin{rem}\label{rem-DL}
Alternatively, one can replace condition \eqref{eq-f-growth-con} by the following conditions:
There exist positive constants $K$, $a$, $ b$, and some $\rho_{3} (x)> 0$ such that
\begin{equation}
\label{funct-gamma-cond}
\begin{array}{ll}
\abs{f(x,i) } \le K (1+ \abs{x}^a) , & \text{ for  all } (x,i) \in \rr^n \times \M, \\
\abs{\gamma(x,i,z)} \le K\rho_{3}(x) (1+ \abs{z}^b),        & \text{ for } \abs{z} \ge 1, \\
\int_{|z|> 1} \abs{z}^{ab} \nu (dz)  < \infty. &
  \end{array}\end{equation}
 Then under (A1),  \eqref{add-gamma-conds}, and \eqref{funct-gamma-cond},
 the assertion of Proposition \ref{prop-dynkin} still hold. The proof is similar to that of Proposition \ref{prop-dynkin} and we shall omit the details here.

\begin{exm}\label{exm-1d-DL}
Suppose $n=1$, $\nu(dz)= \abs{z}^{-1-\beta}dz$ for some $\beta\in (0,2)$ and
 $$\gamma(x,i,z)=g_{1}(x,i) \abs{z}^{b_{1}} I_{\{|z|  \le  1 \}} + g_{2}(x,i) |z|^{b_{2}} I_{\{|z|  >  1\}}$$ for some $b_{1}  > \frac{\beta}{2}$ and $b_{2}  <  \frac{\beta}{2}$,
 where  $g_{1}$ and $g_{2}$  are continuous functions satisfying 
  $$\begin{aligned} &\abs{g_{1}(x,i)}^{2} + \abs{g_{2}(x,i)}^{2}  \le \kappa (1+ |x|^{2}), \\ &  \abs{ g_{1}(x,i) - g_{1}(y,i) } + \abs{ g_{2}(x,i) - g_{2}(y,i) } \le \kappa |x-y|,\end{aligned}$$
  for some $\kappa > 0$ and $(x,i) \in \rr \times \M$.  Consider the operator $\op$ defined in \eqref{op-defn}, in which for simplicity we assume  that $b  =\sigma \equiv 0$. We  have
   $$\int_{\rr_{0}}  |\gamma(x,i,z)|^{2} \nu(dz)  \le 2 \kappa (1+ |x|^{2}) \left[  \int_{(0,1)} z^{2b_{1} -1 -\beta} dz + \int_{[1, \infty)} z^{2b_{2} -1 -\beta} dz  \right] \le K(1+ |x|^{2}), $$ and similarly $$\int_{\rr_{0}} \abs{\gamma(x,i,z) - \gamma(y,i,z)}^{2}\nu(dz) \le K|x-y|^{2},$$ for some $K= K(\kappa, b_{1}, b_{2}, \beta) >0$.  Thus  assumption (A1) is satisfied.

 Suppose for each $i \in \M$, the function $f(\cdot, i) \in C^{2}$ satisfies the first equation of  \eqref{funct-gamma-cond} for some $a\in (0, \frac{\beta}{b_{2}})$.  Clearly both \eqref{add-gamma-conds} and the second equation of \eqref{funct-gamma-cond} are satisfied.  Moreover, it is easy to show that $\int_{|z|> 1} \abs{z}^{ab_{2}} \nu (dz)  = \int_{|z| > 1} |z|^{ab_{2} - 1 - \beta} dz < \infty$, verifying the third equation of  \eqref{funct-gamma-cond}. Thus it follows that $f \in \mathcal D_{\op}$.
 \end{exm}
\end{rem}

\begin{cor}
Under Assumption {\em (A1)}, all functions $f:\rr^{n}\times \M \to \rr$ such that $f(\cdot, i ) \in C_{c}^{2}$ for each $i \in \M$  belong to $\mathcal D_{\op}$ and the Dynkin formula \eqref{Dynkin} holds:
\begin{equation}
\label{Dynkin}
\ex_{x,i} \left[ f(X(\tau), \al(\tau))\right]  = f(x,i) + \ex_{x,i}\biggl[\int_0^\tau \op f(X(s-),\al(s-))ds\biggl],
\end{equation} where $\tau$ is a stopping time with $\pr_{x,i}\set{\tau < \infty}=1$.
\end{cor}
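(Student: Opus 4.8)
The plan is to prove the two claims separately. First, to show $f\in\mathcal D_\op$ whenever each $f(\cdot,i)\in C_c^2(\rr^n)$, I would run the $\abs{z}\le 1$ argument from the proof of \propref{prop-dynkin}, using that compactness of support makes $C_2:=\sup_{y\in\rr^n}\abs{D^2_{xx}f(y,i)}$ finite, so that Taylor's theorem with the Lagrange form of the remainder (legitimate since $f(\cdot,i)\in C^2(\rr^n)$) gives $\abs{f(x+\gamma(x,i,z),i)-f(x,i)-D_xf(x,i)\cdot\gamma(x,i,z)}\le\tfrac12 C_2\abs{\gamma(x,i,z)}^2$ for all $x\in\rr^n$ and $z\in\rr^n_0$; integrating against $\nu$ and invoking the second bound in \eqref{ito-condition} then bounds the integral in the definition of $\mathcal D_\op$ by $\tfrac12 C_2\kappa(1+\abs{x}^2)<\infty$. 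Note that it is precisely the compactness of support that lets us dispense with the auxiliary hypothesis \eqref{add-gamma-conds} needed in \propref{prop-dynkin}.

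For the Dynkin formula I would apply the generalized It\^o formula \eqref{eq:ito} to $f$ and show that the three terms $M_1^f,M_2^f,M_3^f$ of \eqref{Mi-def} are genuine martingales, not merely local ones. Since $\M$ is finite and $f(\cdot,i)\in C_c^2$, $f$ is bounded, so $M_3^f$ is a martingale, as recorded right after \eqref{Mi-def}. The integrand of $M_1^f$ vanishes whenever $X(s-)$ lies outside the compact set $\bigcup_{i\in\M}\operatorname{supp} f(\cdot,i)$ and is bounded there because $\sigma(\cdot,i)$ grows at most linearly, so $M_1^f$ is an $L^2$-martingale. The integrand of $M_2^f$ is bounded by $2\norm{f}_\infty$ and, in the $z$-variable, is supported on $\bigcup_{j\ne\al(s-)}\Delta_{\al(s-),j}(X(s-))$, a set of total Lebesgue measure at most $(m-1)\kappa$ by \eqref{eq-q-bdd}; hence the $L^2(ds\,dz)$-norm of its integrand over $[0,t]\times\rr$ is at most $4(m-1)\kappa\norm{f}_\infty^2\,t<\infty$, so $M_2^f$ is a martingale as well (this is the estimate the text points to with \cite[Lemma 2.4]{Yin-Xi-10}).

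Consequently $M_k^f(\cdot\wedge\tau)$ is a martingale for $k=1,2,3$, so taking $\ex_{x,i}$ in \eqref{eq:ito} evaluated at the bounded stopping time $t\wedge\tau$ yields $\ex_{x,i}[f(X(t\wedge\tau),\al(t\wedge\tau))]=f(x,i)+\ex_{x,i}\bigl[\int_0^{t\wedge\tau}\op f(X(s-),\al(s-))\,ds\bigr]$. Letting $t\to\infty$ and using $\pr_{x,i}\set{\tau<\infty}=1$, the left side tends to $\ex_{x,i}[f(X(\tau),\al(\tau))]$ by bounded convergence, and the right side by dominated convergence once one checks that $\op f$ is bounded (which makes the integrand dominated by a multiple of $\tau$): the drift, second-order, and switching parts are bounded thanks to the compact support of $D_xf(\cdot,i),D^2_{xx}f(\cdot,i)$, the linear growth of $b,\sigma$, and \eqref{eq-q-bdd}, while the nonlocal part is $\le\tfrac12 C_2\kappa(1+\abs{x}^2)$ on bounded sets and, for $\abs{x}$ large where $f(x,i)=D_xf(x,i)=0$, equals $\int_{\rr^n_0}\abs{f(x+\gamma(x,i,z),i)}\nu(dz)\le\norm{f}_\infty\kappa(1+\abs{x}^2)/\dist(x,\operatorname{supp} f(\cdot,i))^2$, which stays bounded as $\abs{x}\to\infty$.

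The step I expect to be the main obstacle is the martingale property of $M_2^f$: because the intensity $dt\times\lambda(dz)$ of $\tilde N_1$ is infinite ($\lambda$ being Lebesgue measure on $\rr$), boundedness of the integrand alone does not upgrade a local martingale to a martingale, and one genuinely has to exploit that the integrand is $z$-supported on a set of length bounded uniformly in $(x,i)$ --- which is exactly what \eqref{eq-q-bdd} provides, via the stochastic-integral representation \eqref{eq:jump-mg} of the switching. Given that, the $t\to\infty$ passage is routine modulo the boundedness of $\op f$; to be careful one also implicitly wants $\ex_{x,i}\tau<\infty$ so that the right-hand side of \eqref{Dynkin} is unambiguously defined, which holds in all the intended applications (for instance when $\tau$ is dominated by a deterministic time or is an exit time from a bounded domain).
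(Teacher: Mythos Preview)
Your argument is correct and follows the same route as the paper's (two-line) proof --- Taylor expansion to verify $f\in\mathcal D_\op$, then the generalized It\^o formula \eqref{eq:ito} together with optional sampling --- only you have filled in the details the paper leaves implicit, including the martingale checks for $M_1^f,M_2^f,M_3^f$ and the boundedness of $\op f$. Your caveat that the $t\to\infty$ passage tacitly needs $\ex_{x,i}[\tau]<\infty$ is well taken and applies equally to the paper's terse argument.
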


\begin{proof} It is easy to see via the Taylor expansion that $f \in \mathcal D_{\op}$. The Dynkin formula \eqref{Dynkin} then follows from  taking expectations on both sides of \eqref{eq:ito} and the optional sampling theorem (\cite[Theorem 1.3.22]{Karatzas-S}).
\end{proof}

In a similar fashion, we can establish the following corollary.
\begin{cor}
Under Assumption {\em (A1)}, all functions $f:\rr^{n}\times \M \to \rr$ such that $f(\cdot, i ) \in C^{2}$ with bounded partial derivatives up to the second order for each $i \in \M$  belong to $\mathcal D_{\op}$ and the Dynkin formula \eqref{Dynkin} holds.
\end{cor}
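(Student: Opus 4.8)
The plan is to follow the same two-step scheme as in the preceding corollary: first establish $f\in\mathcal D_{\op}$, and then take expectations in the generalized It\^o formula \eqref{eq:ito} and remove the (local) martingale terms by means of the optional sampling theorem. The only genuinely new feature relative to the $C^2_c$ case is that $f$ itself need not be bounded, so both the membership $f\in\mathcal D_{\op}$ and the martingale property of the stochastic-integral terms will be obtained from the at-most-linear growth of $f$ that is forced by bounded first derivatives, in place of boundedness of $f$. On the other hand, because the second derivatives of $f(\cdot,i)$ are globally bounded, no auxiliary condition on $\ga$ such as \eqref{add-gamma-conds} is needed here, in contrast to \propref{prop-dynkin}.

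First I would verify $f\in\mathcal D_{\op}$. Fix $(x,i)\in\rr^n\times\M$ and split $\int_{\rr^n_0}\abs{f(x+\ga(x,i,z),i)-f(x,i)-D_x f(x,i)\cdot\ga(x,i,z)}\,\nu(dz)$ over the two regions $\set{\abs{z}\le1}$ and $\set{\abs{z}>1}$. On $\set{\abs{z}\le1}$, a second-order Taylor expansion together with the global bound on $D^2_{xx}f(\cdot,i)$ shows that the integrand is at most a constant (depending only on $f$) times $\abs{\ga(x,i,z)}^2$, which is $\nu$-integrable on $\set{\abs{z}\le1}$ by \eqref{ito-condition}. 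On $\set{\abs{z}>1}$, boundedness of $D_x f(\cdot,i)$ shows, via the mean value theorem, that the integrand is at most a constant times $\abs{\ga(x,i,z)}$, whose $\nu$-integral over $\set{\abs{z}>1}$ is finite by the Cauchy--Schwarz inequality, since $\nu(\set{\abs{z}>1})<\infty$ (a consequence of \eqref{levy-measure}) and $\int_{\rr^n_0}\abs{\ga(x,i,z)}^2\nu(dz)<\infty$ (from \eqref{ito-condition}). Hence $f\in\mathcal D_{\op}$ and the generalized It\^o formula \eqref{eq:ito} is available for $f$.

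Next I would show that $M_1^f,M_2^f,M_3^f$ from \eqref{Mi-def} are true --- in fact $L^2$ --- martingales on each interval $[0,T]$, so that their expectations vanish. This rests on combining (i) the at-most-linear growth of $f(\cdot,i)$, $b(\cdot,i)$ and $\sigma(\cdot,i)$, the bound $\int_{\rr^n_0}\abs{\ga(x,i,z)}^2\nu(dz)\le\kappa(1+\abs{x}^2)$, and $\sum_{j\ne i}q_{ij}(x)\le(m-1)\kappa$ (so that, for each fixed $s$, the set $\set{z:h(X(s-),\al(s-),z)\ne0}$ has Lebesgue measure at most $(m-1)\kappa$), all drawn from \eqref{ito-condition}--\eqref{eq-q-bdd}; with (ii) the standard second-moment estimate $\ex_{x,i}\big[\sup_{0\le s\le T}\abs{X(s)}^2\big]<\infty$, which holds under (A1) (cf.\ \cite{Xi-Yin-11}). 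Together these make the predictable quadratic variations of $M_1^f$, $M_2^f$ and $M_3^f$ integrable in expectation on $[0,T]$, whence each $M_i^f$ is a zero-mean martingale. Taking expectations in \eqref{eq:ito} and applying the optional sampling theorem \cite[Theorem 1.3.22]{Karatzas-S} --- with a routine localization at $\tau\wedge N\wedge\inf\set{s:\abs{X(s)}\ge N}$ and passage to the limit $N\to\infty$ using the moment estimate, dominated convergence, and the facts that $\pr_{x,i}\set{\tau<\infty}=1$ and $(X,\al)$ does not explode --- then yields \eqref{Dynkin}. I expect the crux to be this last stage: since $f$ is not assumed bounded, the martingale property of $M_2^f$ and $M_3^f$ does not come for free (as it does when $f\in C^2_c$) and must instead be extracted from the linear-growth half of (A1) together with the second-moment bound for $(X,\al)$; the remaining steps are a routine adaptation of \propref{prop-dynkin} and the proof of the preceding corollary.
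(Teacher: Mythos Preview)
Your proposal is correct and matches the paper's approach, which in fact provides no detailed argument beyond the remark that the corollary is established ``in a similar fashion'' to the preceding one. One minor simplification worth noting: since $D^2_{xx} f(\cdot,i)$ is globally bounded, the Taylor remainder estimate
\[
\abs{f(x+\ga,i)-f(x,i)-D_x f(x,i)\cdot\ga}\le\tfrac12\,\norm{D^2 f}_\infty\,\abs{\ga}^2
\]
holds for every $z\in\rr^n_0$, so the split into $\set{\abs{z}\le1}$ and $\set{\abs{z}>1}$ is not actually needed to verify $f\in\mathcal D_{\op}$; a single appeal to $\int_{\rr^n_0}\abs{\ga(x,i,z)}^2\nu(dz)<\infty$ from \eqref{ito-condition} suffices.
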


We end the section with a brief  discussion on the existence and uniqueness for solution to the system represented by
\eqref{sw-jump-diffusion} and \eqref{Q-gen} (or equivalently, \eqref{sw-jump-diffusion} and \eqref{eq:jump-mg})
when assumption (A1) is only satisfied locally.
The global Lipschitz and linear growth conditions in
assumption (A1)
 for the coefficients of \eqref{sw-jump-diffusion}
can be  restrictive in many applications.
 For instance, the  mean-reverting model, the logistic growth model, and the Lotka-Volterra model  do not satisfy the linear growth condition. Therefore it is vital to
  relax assumption (A1).
 The following result gives a set a sufficient conditions under which  system  represented by \eqref{sw-jump-diffusion} and \eqref{Q-gen} (or equivalently, \eqref{sw-jump-diffusion} and \eqref{eq:jump-mg})
 still has a unique strong solution even if (A1) is violated.

\begin{prop}\label{prop-reg-0}
Suppose that
for  each bounded open ball $B(\rho)$
centered at $0$ with radius $\rho$,
Assumption {\em (A1)}  is satisfied
with $\kappa_{\rho}$ replacing the global constant $\kappa$.
Assume also that there is a function $V(\cdot,\cdot): \rr^n \times \M\mapsto \rr_+$
having continuous partial derivatives \wrt $x$ up to the second order  for each $i\in \M$  and satisfying for some positive constants $K_V$ and $\gamma_0$ that
 \begin{align}
 \label{V-integrable}
\int_{\rr_0^n} \abs{V(x+\gamma(x,i,z),i)-V(x)-D_x V(x,i)\cdot \gamma(x,i,z)} \nu(dz) < \infty,\\
\label{V-lip}
\abs{V(x,i)-V(y,i) } \le K_V \abs{x-y}, \text{ for all } x,y \in \rr^n \text{ and } i \in \M, \\
\label{eq:reg-cond}
 \op V(x,i) \le \gamma_0 V(x,i) \ \hbox{ for all } \ (x,i) \in \rr^n\times \M,
\end{align}
and
\begin{align}\label{eq2:reg-cond}
& V_R := \inf_{|x|\ge R, i\in \M} V(x,i) \to \infty \ \hbox{ as } \ R\to \infty.\end{align}
Then  the system represented by \eqref{sw-jump-diffusion} and \eqref{Q-gen} $($or equivalently, \eqref{sw-jump-diffusion} and \eqref{eq:jump-mg}$)$
has a unique strong global solution.
\end{prop}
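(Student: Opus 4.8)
The plan is to use a standard localization (truncation) argument combined with a Lyapunov-function criterion for non-explosion, exactly in the spirit of the classical proofs for SDEs (Khasminskii-type arguments) adapted to the regime-switching jump-diffusion setting. First I would observe that the local Lipschitz and local linear growth hypothesis (Assumption (A1) holding on each ball $B(\rho)$ with constant $\kappa_\rho$) yields, via \cite[Proposition 2.1]{Xi-Yin-11} applied to truncated coefficients, a unique strong solution up to an explosion time. Concretely, for each integer $\rho\ge 1$ pick smooth cutoff functions and define truncated coefficients $b_\rho,\sigma_\rho,\gamma_\rho$ that agree with $b,\sigma,\gamma$ on $B(\rho)$ and are globally Lipschitz with linear growth; the corresponding system has a unique global strong solution $(X^\rho\cd,\al^\rho\cd)$. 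Let $\tau_\rho:=\inf\{t\ge 0: |X(t)|\ge \rho\}$ (with the convention $\inf\emptyset=+\infty$). By pathwise uniqueness on $B(\rho)$ the processes $X^\rho$ and $X^{\rho'}$ agree on $[0,\tau_\rho\wedge\tau_{\rho'})$, so the $\tau_\rho$ are nondecreasing and we may unambiguously define $X\cd$ on $[0,\tau_\infty)$ where $\tau_\infty:=\lim_{\rho\to\infty}\tau_\rho$. The whole game is then to show $\tau_\infty=\infty$ a.s., which rules out explosion and produces the desired unique strong global solution.

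Next I would bring in the Lyapunov function $V$. The conditions \eqref{V-integrable}, \eqref{V-lip} guarantee that $V(\cdot,i)\in\mathcal D_\op$ in the localized sense needed, so the generalized It\^o formula \eqref{eq:ito} applies to $V$ along the stopped process. Fix $T>0$ and $\rho$, and apply \eqref{eq:ito} to $e^{-\gamma_0 t}V(X(t),\al(t))$ on the interval $[0,t\wedge\tau_\rho]$: the drift term becomes $e^{-\gamma_0 s}\big(\op V(X(s-),\al(s-))-\gamma_0 V(X(s-),\al(s-))\big)$, which is $\le 0$ by \eqref{eq:reg-cond}. The martingale terms $M_1^V,M_2^V,M_3^V$ stopped at $\tau_\rho$ are genuine martingales (the process stays in the bounded set $B(\rho)$, where the Lipschitz bound \eqref{V-lip} and local (A1) control the integrands), so taking expectations gives
\[
\ex_{x_0,\al_0}\big[e^{-\gamma_0 (t\wedge\tau_\rho)}V(X(t\wedge\tau_\rho),\al(t\wedge\tau_\rho))\big]\le V(x_0,\al_0).
\]
On the event $\{\tau_\rho\le t\}$ one has $|X(\tau_\rho)|\ge\rho$, hence $V(X(\tau_\rho),\al(\tau_\rho))\ge V_\rho:=\inf_{|x|\ge\rho,i\in\M}V(x,i)$, so the left side is $\ge e^{-\gamma_0 t}V_\rho\,\pr_{x_0,\al_0}\{\tau_\rho\le t\}$. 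Therefore
\[
\pr_{x_0,\al_0}\{\tau_\rho\le t\}\le \frac{e^{\gamma_0 t}V(x_0,\al_0)}{V_\rho}.
\]
Letting $\rho\to\infty$ and invoking \eqref{eq2:reg-cond} ($V_\rho\to\infty$) yields $\pr_{x_0,\al_0}\{\tau_\infty\le t\}=0$ for every $t>0$, and then letting $t\to\infty$ gives $\tau_\infty=\infty$ a.s. Hence $X\cd$ is defined for all $t\ge 0$, and together with $\al\cd$ it is the unique strong global solution; uniqueness is inherited from the local pathwise uniqueness on each $B(\rho)$.

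The main obstacle I anticipate is not the probabilistic skeleton above, which is routine, but the careful justification that the stopped martingale terms $M_i^V(\cdot\wedge\tau_\rho)$ really are martingales (not merely local martingales) and that the generalized It\^o formula \eqref{eq:ito} is legitimately applicable to $V$ even though $V$ is only assumed Lipschitz and $C^2$ in $x$ rather than having quadratic growth — this is precisely why \eqref{V-integrable} is imposed, and one must check that on $B(\rho)$ the jump integrand $V(X(s-)+\gamma(X(s-),\al(s-),z),\al(s-))-V(X(s-),\al(s-))$ is square-integrable against $\nu(dz)\,ds$, using \eqref{V-lip} together with the local bound $\int_{\rr^n_0}|\gamma(x,i,z)|^2\nu(dz)\le\kappa_\rho(1+|x|^2)$ from (A1). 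A secondary point requiring care is the compactness/cutoff construction of the truncated coefficients so that the switching generator $Q(x)$ (already globally bounded by \eqref{eq-q-bdd}) and the jump coefficient $\gamma$ are handled simultaneously; here I would simply multiply $\gamma(x,i,z)$ by a smooth spatial cutoff, which preserves both the Lipschitz and the $L^2(\nu)$ bounds on each ball, and quote \cite[Proposition 2.1]{Xi-Yin-11} for existence/uniqueness of the truncated system.
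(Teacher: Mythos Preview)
Your proposal is correct and follows essentially the same approach as the paper: localize via truncated coefficients to get solutions up to $\tau_\rho$, apply the generalized It\^o formula to $e^{-\gamma_0 t}V(X(t),\al(t))$, use \eqref{V-lip} together with the local bound $\int_{\rr_0^n}|\gamma(x,i,z)|^2\nu(dz)\le\kappa_\rho(1+|x|^2)$ to verify that the stopped jump term $M_3^V(\cdot\wedge\tau_\rho)$ is a true martingale, and then invoke \eqref{eq2:reg-cond} to rule out explosion. The only cosmetic difference is that the paper phrases the non-explosion step as a proof by contradiction whereas you give the direct probability bound $\pr\{\tau_\rho\le t\}\le e^{\gamma_0 t}V(x_0,\al_0)/V_\rho$; the underlying inequality is identical.
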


\begin{proof}
Note that \eqref{V-integrable} guarantees that $V\in  \mathcal D_\op$.
With the given initial condition $(x_{0},\alpha_{0}) \in \rr^{n }\times \M$ as in \eqref{swjd-initial},
since Assumption (A1) is satisfied locally, for any $k\in \mathbb N$ with $|x_{0}| < k$, the system
given by \eqref{sw-jump-diffusion} and \eqref{Q-gen} (or equivalently, \eqref{sw-jump-diffusion} and \eqref{eq:jump-mg})
has a unique strong solution $(X^{(k)}, \al^{(k)}):= (X^{(k),x_{0},\al_{0}}, \al^{(k), x_{0},\al_{0}})$ locally up to the exit time $t\le   \tau_{k}$, where   \begin{equation}
\label{eq-tau-k}
 \tau_k:=\inf\set{t\ge 0: \big\vert X^{(k)}(t)\big\vert \ge k}.
  \end{equation} Moreover, as in \cite[Section 3.4]{K80}, we can construct a sequence $(X^{(k)}\cd,\al^{(k)}\cd)$ in such a way so that $(X^{(\iota)}\cd,\al^{(\iota)}\cd)$ are identical before exiting the ball $B(k)$   for all $ \iota\ge k$.  In particular, we have
  $$ \inf\set{t\ge 0: \big\vert X^{(\iota)}(t) \big\vert\ge k} = \tau_{k}, \text{ for all } \iota \ge k,$$ where $\tau_{k}$ is defined in \eqref{eq-tau-k}.
  Therefore we can define a process $(X,\al)$ so that $(X(t),\al(t)):= (X^{(k)}(t),\al^{(k)}(t))$ for $t < \tau_{k}$.
Clearly $\tau_k$ is an increasing sequence. Denote $\tau_\infty:= \lim_{k\to \infty} \tau_k$. We need to show that $\tau_{\infty}= \infty$ a.s.
 Suppose on the
contrary that the statement were false. Then  there would exist some
 $T>0$ and $\e>0$ such that $\pr_{x,i}\set{\tau_\infty \le T } > \e.$
 Therefore we could find some $k_1 \in \mathbb N$ such that
 \beq{2-beta-k} \pr_{x,i}\set{\tau_k \le T}> \e, \hbox{ for all } k\ge k_1. \eeq
 Define
$$U(x,i,t) = V(x,i) \exp(-\ga_0 t), \ \ (x,i) \in \rr^n \times \M, \hbox{ and }
t\ge 0.$$
Then it satisfies $ [(\partial / \partial t) +\op] U(x,i,t) \le 0$.
Using the generalized It\^o formula \eqref{eq:ito}, we have
\begin{equation}\label{V-Ito}
\begin{aligned}
& V(X(\tau_k \wedge T),\al(\tau_k  \wedge T))\exp(-\ga_0(\tau_k \wedge T )) -V(x,i) \\
 & \  \ =\int_0^{\tau_k \wedge T} e^{-\gamma_0 s} (\op-\gamma_0  ) V(X(s-),\al(s-)) ds + M_1^V(\tau_k\wedge T) + M_2^V(\tau_k\wedge T) + M_3^V(\tau_k\wedge T),
\end{aligned}\end{equation}
where
  \begin{align*} & M_1^V(t)  = \int^{t}_0  e^{-\gamma_0 s}D_x
V(X(s-),\al(s-)) \cdot \sigma(X(s-),\al(s-))
d W(s), \\
 & M_2^V(t)  =  \int^{t}_0\!\!\int_{\rr} e^{-\gamma_0 s} \big[ V( X(s-), \al(s-)+
h(X(s-),\al(s-),z))
-V(X(s-),\al(s-))\big] \tilde N_{1}(ds, dz), \\
&  M_3^V(t) = \int^{t}_0 \!\! \int_{\rr^n_0} e^{-\gamma_0 s} [ V(X(s-) + \gamma(X(s-),\al(s-),z),\al(s-))\\
  &  \qquad \hspace{2in}- V(X(s-),\al(s-))]\tilde N(ds, dz).
\end{align*}
Clearly $\ex[M_1^V(T\wedge \tau_k)]=\ex[M_2^V(T\wedge \tau_k)] =0$.  We need to analyze the term $M_3^V(\tau_k \wedge T)$ carefully.
Now   it follows from \eqref{V-lip} that
\bea \ad \ex \biggl[\int^{\tau_k \wedge T}_0 \!\! \int_{\rr^n_0} e^{-2\gamma_0 s}     \large| V(X(s-) + \gamma(X(s-),\al(s-),z),\al(s-))
  - V(X(s-),\al(s-))\large |^2 \nu(dz) ds \biggl] \\ [2ex]
  \aad  \le \ex \biggl[ \int^{\tau_{k}\wedge T}_0\!\! \int_{\rr^n_0} K_V^2 \abs{ \gamma(X(s-),\al(s-),z)}^2 \nu(dz) ds\biggl]\\[2ex]
\aad \le  K_V^2  \kappa_k  \ex \biggl[  \int^{T}_0  (1+ \abs{X(s-)}^2) ds \biggl]   \le K_V^2  \kappa_k  (1+ k^2) T < \infty.
\eea
Thus it follows that $\ex[M_3^V(T\wedge \tau_k)] =0$, as desired.
Taking expectations on both sides of \eqref{V-Ito} and using \eqref{eq:reg-cond},  we have
  \bed \barray \ad \ex_{x,i}
\left[V(X(\tau_k \wedge T),\al(\tau_k  \wedge T))
\exp
(-\ga_0(\tau_k \wedge T ))\right] - V(x,i) \\
\aad \ = \ex_{x,i}  \biggl[\int^{\tau_k  \wedge T}_{0} \({\partial \over
\partial
t} + \op\) U(X(u-),\al(u-),u-) du\biggl] \le 0.\earray\eed
Hence  for all $k \ge k_{1}$,
by \eqref{2-beta-k} and \eqref{eq2:reg-cond},  we have  $$\begin{aligned}V(x,i) & \ge
 \ex_{x,i}\left[V(X(\tau_k \wedge T),\al(\tau_k  \wedge T))
\exp
(-\ga_0(\tau_k \wedge T ))\right].
\\
& \ge  \ex_{x,i}\left[ V(X(\tau_{k}),\alpha(\tau_{k}))\exp(-\gamma_{0}\tau_{k}) I_{\set{\tau_{k}\le  T}} \right]
\\ & \ge \ex_{x,i}\left[ V_{k} e^{-\gamma_{0}T}I_{\set{\tau_{k}\le  T}} \right] \\
&  > \e V_{k}e^{-\gamma_{0}T} \to \infty \ \hbox{ as } \ k \to \infty,
\end{aligned}$$  which is a contradiction. Hence we must have $\tau_{\infty} = \infty$ a.s. or the process $(X\cd,\al\cd)$ is a global solution to the system
given by \eqref{sw-jump-diffusion} and \eqref{Q-gen} (or equivalently, \eqref{sw-jump-diffusion} and \eqref{eq:jump-mg}).

We proceed to establish the uniqueness. Suppose that there is another global solution $(\tilde X, \tilde \al)$ to
\eqref{sw-jump-diffusion} and \eqref{Q-gen} (or equivalently, \eqref{sw-jump-diffusion} and \eqref{eq:jump-mg}).
By the construction of the processes $(X,\al)$ and $(\tilde X, \tilde \al)$ and the fact that $\tau_{k} \to \infty$ a.s. as $k \to \infty$, we have
We consider $$\begin{aligned}\pr&\set{(X(t),\al(t)) =(\tilde X(t),\tilde \al(t)), \forall 0 \le t < \infty } \\  & = \pr\set{\bigcap_{k=1}^{\infty}(X(t),\al(t)) =(\tilde X(t),\tilde \al(t)), \forall 0 \le t < \tau_{k} } \\
& = \lim_{k \to \infty} \pr\set{(X(t),\al(t)) =(\tilde X(t),\tilde \al(t)), \forall 0 \le t < \tau_{k} } \\
& =1. \end{aligned}$$
This shows that the solution to  \eqref{sw-jump-diffusion}--\eqref{eq:jump-mg} is pathwise unique.
\end{proof}

\section{Feynman-Kac Formula}\label{sect-feykac}

We aim to   derive several versions of the Feynman-Kac formulas, corresponding to coupled  systems of partial integro-differential  equations  of Cauchy and Dirichlet types, respectively. Section \ref{sect-cauchy} deals with the Cauchy problem and Section \ref{sec:1st} investigates the Dirichlet problem. To this end, we need the following lemma, which establishes the moment bounds for the regime-switching jump diffusion.

\begin{lem}\label{lem-bdd-moments} Let $T >0$ be fixed.
\begin{itemize}
  \item[{\em (a)}] Then under Assumption {\em (A1)},  for any positive constant $p\in (0,2]$, we have
\begin{equation}
\label{eq-momen-bd}
\ex_{x,i}\biggl[\sup_{t\in [0,T]}\abs{X(t)}^p\biggl] \le K <\infty, \ \ (x,i) \in \rr^n \times \M,
\end{equation}
where $K=K(x,T,p)$ is a constant.

\item[{\em (b)}] Suppose  Assumption {\em (A1)}.  In addition, if for some $\tilde p >2$ and $\kappa_2>0$,
 \begin{equation} \label{eq-gamma-p}\int_{\rr^n_0} \abs{\ga(x,i,z)}^{\tilde p} \nu(dz)
 \le \kappa_2(1+\abs{x}^{\tilde p}), \ (x,i) \in \rr^n \times \M.\end{equation}
 Then \eqref{eq-momen-bd}  is satisfied for all $p \in (0, \tilde p]$.
\end{itemize}
\end{lem}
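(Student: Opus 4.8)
The plan is to treat part (a) first with the exponent $p=2$ by the standard localization-and-Gronwall scheme, to deduce the range $0<p<2$ from Jensen's inequality, and finally to obtain part (b) by rerunning the same scheme with the exponent $\tilde p$, the only genuinely new ingredient being a $\tilde p$-th moment estimate (Kunita's inequality) for the stochastic integral against the compensated Poisson random measure $\tilde N$.

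For $k\in\N$ set $\tau_k:=\inf\{t\ge0:\abs{X(t)}\ge k\}$; since under {\em(A1)} the system \eqref{sw-jump-diffusion}--\eqref{Q-gen} has a unique global strong solution, $\tau_k\uparrow\infty$ a.s. Writing \eqref{sw-jump-diffusion} in integrated form and using $\abs{a_1+a_2+a_3+a_4}^2\le4\sum_{j=1}^4\abs{a_j}^2$, I would bound $\sup_{s\le t\wedge\tau_k}\abs{X(s)}^2$ by $4\abs{x}^2$ plus the running suprema of the squared drift, Brownian, and jump integrals. Taking $\ex_{x,i}$, the drift term is controlled by the Cauchy--Schwarz inequality and the linear growth of $b$ following from \eqref{ito-condition}; the Brownian term by Doob's $L^2$ maximal inequality, the It\^o isometry, and the linear growth of $\sigma$; and the jump term by Doob's $L^2$ maximal inequality, the isometry for compensated Poisson integrals, and the bound $\int_{\rr^n_0}\abs{\gamma(x,i,z)}^2\nu(dz)\le\kappa(1+\abs{x}^2)$ from \eqref{ito-condition}. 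Since $\abs{X(s-)}\le k$ on $[0,t\wedge\tau_k]$, the stopped stochastic integrals are true $L^2$-martingales and $\ex_{x,i}[\sup_{s\le t\wedge\tau_k}\abs{X(s)}^2]\le k^2<\infty$, so collecting the estimates gives
\[
\ex_{x,i}\Bigl[\sup_{s\le t\wedge\tau_k}\abs{X(s)}^2\Bigr]\le C(x,T)+C\int_0^t\ex_{x,i}\Bigl[\sup_{u\le r\wedge\tau_k}\abs{X(u)}^2\Bigr]\,dr,\qquad 0\le t\le T,
\]
with $C$ independent of $k$. Gronwall's inequality yields a bound uniform in $k$; letting $k\to\infty$ and using Fatou's lemma (as $\sup_{s\le t\wedge\tau_k}\abs{X(s)}^2\uparrow\sup_{s\le t}\abs{X(s)}^2$) proves \eqref{eq-momen-bd} for $p=2$. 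For $0<p<2$, the concavity of $u\mapsto u^{p/2}$ and Jensen's inequality give $\ex_{x,i}[\sup_{t\le T}\abs{X(t)}^p]\le(\ex_{x,i}[\sup_{t\le T}\abs{X(t)}^2])^{p/2}<\infty$.

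For part (b) I would repeat the computation with the exponent $\tilde p>2$. The drift term is handled by H\"older's inequality in the form $(\int_0^T f\,dr)^{\tilde p/2}\le T^{\tilde p/2-1}\int_0^T f^{\tilde p/2}\,dr$ together with the linear growth of $b$, and the Brownian term by the Burkholder--Davis--Gundy inequality followed by the same H\"older step and the linear growth of $\sigma$. For the jump integral I would invoke Kunita's inequality \cite[Theorem 4.4.23]{APPLEBAUM}: for $\tilde p\ge2$,
\begin{align*}
\ex_{x,i}\Bigl[\sup_{s\le t\wedge\tau_k}\Bigl|\int_0^s\!\!\int_{\rr^n_0}\gamma\,\tilde N(dr,dz)\Bigr|^{\tilde p}\Bigr]
&\le C_{\tilde p}\,\ex_{x,i}\Bigl(\int_0^{t\wedge\tau_k}\!\!\int_{\rr^n_0}\abs{\gamma}^2\nu(dz)\,dr\Bigr)^{\tilde p/2}\\
&\quad{}+C_{\tilde p}\,\ex_{x,i}\int_0^{t\wedge\tau_k}\!\!\int_{\rr^n_0}\abs{\gamma}^{\tilde p}\nu(dz)\,dr,
\end{align*}
where $\gamma$ abbreviates $\gamma(X(r-),\al(r-),z)$; the first term is then bounded using $\int_{\rr^n_0}\abs{\gamma}^2\nu(dz)\le\kappa(1+\abs{X(r-)}^2)$ and the same H\"older step, and the second term directly by the extra hypothesis \eqref{eq-gamma-p}. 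This again produces a Gronwall-type inequality for $\ex_{x,i}[\sup_{s\le t\wedge\tau_k}\abs{X(s)}^{\tilde p}]$ (finite for each fixed $k$), whence \eqref{eq-momen-bd} follows for $p=\tilde p$, and for $0<p<\tilde p$ by Jensen's inequality as above.

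\textbf{Main obstacle.} The Gronwall bookkeeping is routine; the point that needs care is the jump contribution. For $p=2$ the It\^o isometry suffices, but for $\tilde p>2$ one genuinely needs the moment inequality for integrals against $\tilde N$ (Kunita / Bichteler--Jacod), and this is precisely why the extra assumption \eqref{eq-gamma-p} on the $\tilde p$-th moment of $\gamma$ is imposed. A minor but essential point is that the left-hand side of the Gronwall inequality must be known to be finite before Gronwall is applicable, which is why the localizing sequence $\tau_k$ is carried throughout and the limit $k\to\infty$ is taken only at the end via Fatou's lemma.
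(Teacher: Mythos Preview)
Your proposal is correct and follows essentially the same route as the paper: decompose $|X(t)|^p$ into initial condition plus drift, Brownian, and jump contributions; control the first two via linear growth and Burkholder--Davis--Gundy/Doob, and the jump term via Kunita's inequality \cite[Theorem 4.4.23]{APPLEBAUM} (the paper also cites \cite[Lemma 4]{MR10}); then close with Gronwall. Two minor differences: the paper does not carry the localizing sequence $\tau_k$ explicitly (so your version is in fact more careful on this point), and for the lower exponents the paper splits into $1\le p<\tilde p$ via H\"older and $0<p<1$ via the elementary bound $|x|^p\le 1+|x|^{1+p}$, whereas your single Jensen step handles all $0<p<\tilde p$ at once and is cleaner.
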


\begin{proof} We shall  only prove Part (b);   Part (a)  can be established in a  similar manner.

Step 1: Consider first the case $p=\tilde p\ge  2$.
Note that
\beq{eq:jp-1}\barray
|X(t)|^{\tilde p} \ad \le 4 ^{\tilde p-1} \left[  |x|^{\tilde p}+ \l \int^{t}_0 b(X(s-),\al(s-)) ds \r ^{\tilde p} + \l \int^{t}_0 \sg (X(s-),\al(s-)) dW(s) \r^{\tilde p}\right. \\
\aad \qquad \qquad + \left. \l \int^{t}_0 \int_{\rr^n_0} \ga (X(s-),\al(s-), y) \wdt N (ds, dy)\r^{\tilde p} \right].\earray\eeq
Using \eqref{ito-condition}, taking expectation in \eqref{eq:jp-1}, for the first terms on the right-hand side of \eqref{eq:jp-1},
similar to \cite[Proposition 2.3, pp.31-33]{YZ-10}, we obtain
\beq{eq:jp-2}\barray
\ad  \ex_{x,i}\left[ \sup_{0\le t\le T}
\left\{ |x|^{\tilde p}+ \l \int^{t}_0 b(X(s-),\al(s-)) ds \r ^{\tilde p} + \l \int^{t}_0 \sg (X(s-),\al(s-)) dW(s) \r^{\tilde p}\right\}\right]
\\
\aad \ \le K_1 + K_2 \int^T_0  \ex_{x,i} \sup_{1\le u\le s}| X(u)|^{\tilde p} ds ,\earray\eeq
where $K_i=    K_i(x,\tilde p,T)  $ for $i=1,2$.

By virtue of \cite[Lemma 4]{MR10} (see also \cite[Theorem 4.4.23]{APPLEBAUM}) together with \eqref{ito-condition} and \eqref{eq-gamma-p},
\begin{align} 
 \nonumber  &  \ex_{x,i}   \left[\sup_{0\le t\le T} \biggl|\int^t_0 \int_{\rr^n_0} \ga (X(s-),\al(s-), y) \wdt N (ds, dy)\biggl|^{\tilde p} \right]
\\ \nonumber 
 &  \  \ \le K_3 \ex_{x,i}\biggl[ \int^{T}_0 \Big(\int_{\rr^n_0} |\ga (X(s-),\al(s-), y)|^2 \nu (dz)\Big)^{\tilde p/2}ds+  \int^{T}_0 \int_{\rr^n_0} |\ga (X(s-),\al(s-), y)|^{\tilde p} \nu (dz)ds\biggr] \\
\nonumber  &  \ \  \le K_3 \ex_{x,i}\left[\int^{T}_0 \Big(  [1+ |X(s)|^2] \Big)^{\tilde p/2} ds\right]  + K_3  \ex_{x,i} \left[ \int^{T}_0 [1+|X(s)|^{\tilde p}] ds\right]\\
& \ \  \label{eq:jp-3} \le K_4 +K_5 \int^{T}_0\ex_{x,i} \left[\sup_{0\le u\le s} |X(u)|^{\tilde p} \right]ds. \end{align}
Again, $K_i, i=3, 4, 5$  depend  on $x$, $\tilde p$, and $T$ only.
Combining \eqref{eq:jp-2} and \eqref{eq:jp-3}, we obtain
that
\beq{eq:jp-4} \barray
\disp   \ex_{x,i}\left[ \sup_{0\le t\le T} |X(t)|^{\tilde p}\right] \ad\le K_5  + K_6 \int^T_0 \ex_{x,i}\left[\sup_{1\le u\le s} | X(u)|^{\tilde p} \right] ds,\earray\eeq
with $K_4$ and $K_5$ depending on $x$, $\tilde p$, and $T$.
The desired result then follows from Gronwall's inequality.

Step 2: The case when   $ 1\le p < \tilde p$ follows from H\"older's inequality.

Step 3: Suppose that $0<p<1$.
Since $|x|^p= |x|^p I_{\{|x|\ge 1\}} + |x|^p [1-I_{\{|x|\ge 1\}}]\le 1+|x|^{1+p}$,
using the result in Step 2,
$$\ex_{x,i} \left[ \sup_{0\le t\le T}|X(t)|^p\right]\le \ex_{x,i}\left[ 1+ \sup_{0\le t\le T}|X(t)|^{1+p}\right]\le K < \infty.$$
Combing the above steps gives \eqref{eq-momen-bd}.
\end{proof}

\subsection{Cauchy Problems}\label{sect-cauchy}

\begin{thm}\label{thm-feyman1}
Assume 
 {\em (A1)}.
Consider the coupled system of partial integro-differential equations of the form
 \begin{equation}\label{feynman-kac}
\begin{cases}\dfrac{\partial}{\partial t} u(t,x,i)= \op u(t,x,i) - c(x,i) u(t,x,i),\ \ &(t,x,i)\in (0,\infty)\times \rr^n \times \M, \\
u(0,x,i)= f(x,i), \ \ & (x,i)\in  \rr^n \times \M, \end{cases}
\end{equation}
where $\op$ is as in \eqref{op-defn},   $0 \le c(\cdot,i) \in C(\rr^n )$, and $f(\cdot, i)\in C(\rr^n)$ for each $i\in \M$.
 If $u$ is a classical solution to \eqref{feynman-kac} satisfying 
\begin{equation}
\label{eq-u-growth-cond}
\abs{u(t,x,i)}  \le K (1+ \abs{x}^2), \text{ for some } K>0 \hbox{ and all } t \ge 0 \text{ and } (x,i) \in \rr^n \times \M,
\end{equation}
then it admits a stochastic representation
\begin{equation}\label{stoch-soln-FK}\begin{aligned}
u(t,x,i)  & =
  \ex_{x,i} \left[\exp\set{-\int_0^t c(X(s),\al(s))ds} f(X(t),\al(t))\right] \\
 &  = \ex \left[\exp\set{-\int_0^t c(X^{x,i}(s),\al^{x,i}(s))ds} f(X^{x,i}(t),\al^{x,i}(t))\right].
  \end{aligned} \end{equation}
\end{thm}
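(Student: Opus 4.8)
The plan is to fix $(t,x,i)\in(0,\infty)\times\rr^n\times\M$ and apply a time-dependent version of the generalized It\^o formula \eqref{eq:ito} (equivalently, apply \eqref{eq:ito} to the space-time process) to
$$
s\longmapsto \Phi(s)\,u\bigl(t-s,X(s),\al(s)\bigr),\qquad s\in[0,t],\qquad
\Phi(s):=\exp\Bigl\{-\int_0^s c(X(r),\al(r))\,dr\Bigr\}.
$$
Since $c\ge 0$, $\Phi$ is a continuous, nonincreasing, $[0,1]$-valued finite-variation process with $d\Phi(s)=-c(X(s),\al(s))\Phi(s)\,ds$, so it contributes no quadratic-covariation term in the product rule. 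Because $u$ is a classical solution, $u(\cdot,\cdot,j)\in C^{1,2}$ and, for each $s>0$ and $j\in\M$, $u(s,\cdot,j)\in\mathcal D_\op$ (otherwise $\op u$ in \eqref{feynman-kac} would not be defined), so It\^o's formula applies. Writing $g(s,y,j):=u(t-s,y,j)$, the chain rule in the time variable together with \eqref{feynman-kac} gives $\partial_s g=-(\partial_t u)(t-s,\cdot)=-\op g+c\,g$; combined with the $\op g$ produced by \eqref{eq:ito} and the $-c\,g$ coming from $d\Phi$, the whole finite-variation (drift) part cancels, leaving
$$
\Phi(s)\,u\bigl(t-s,X(s),\al(s)\bigr)=u(t,x,i)+\widetilde M(s),\qquad s\in[0,t],
$$
where $\widetilde M=\widetilde M_1+\widetilde M_2+\widetilde M_3$, each $\widetilde M_\ell(s)=\int_0^s\Phi(r)\,dM_\ell^{g}(r)$ of the form \eqref{Mi-def} (with integrand $g$), a local martingale null at $0$ since $\Phi$ is predictable and bounded.

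Next I would localize: let $\{\rho_k\}$ be a localizing sequence for $\widetilde M$ with $\rho_k\uparrow\infty$ a.s., so that $\ex[\widetilde M(t\wedge\rho_k)]=0$ and hence
$$
u(t,x,i)=\ex_{x,i}\!\left[\Phi(t\wedge\rho_k)\,u\bigl(t-t\wedge\rho_k,\,X(t\wedge\rho_k),\,\al(t\wedge\rho_k)\bigr)\right]\qquad\text{for every }k.
$$
To pass $k\to\infty$, observe that a.s. $t\wedge\rho_k=t$ for all large $k$, so the integrand converges a.s. to $\Phi(t)\,u(0,X(t),\al(t))=\Phi(t)\,f(X(t),\al(t))$; moreover $0\le\Phi\le 1$ while, by the growth bound \eqref{eq-u-growth-cond}, $\bigl|u(t-t\wedge\rho_k,X(t\wedge\rho_k),\al(t\wedge\rho_k))\bigr|\le K\bigl(1+\sup_{0\le s\le t}|X(s)|^2\bigr)$, which is integrable by Lemma~\ref{lem-bdd-moments}(a) with $p=2$. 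Thus the family is dominated by a fixed integrable random variable, and the dominated convergence theorem yields \eqref{stoch-soln-FK} (the second equality there being a restatement in the $(X^{x,i},\al^{x,i})$ notation).

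I expect the main obstacle to be the handling of the (local) martingale terms in the presence of a general L\'evy measure $\nu$. Unlike the pure-diffusion case (where $M_3\equiv0$) or the case of bounded $u$ (where $M_3^{u}$ is a genuine martingale, as used after \eqref{Mi-def}), under \eqref{ito-condition} alone and with $u$ only of quadratic growth one cannot expect $\widetilde M_3$—built from the increments $u(X(s-)+\gamma(X(s-),\al(s-),z),\al(s-))-u(X(s-),\al(s-))$—to be a true martingale, since bounding its predictable quadratic variation would require moments of $\gamma(x,i,\cdot)$ (and of $X$) beyond those in \eqref{ito-condition}; large jumps can also carry $X(s-)$ out of any ball on which $u$ and its derivatives are bounded, so naively stopping at $\tau_k:=\inf\{s:|X(s)|\ge k\}$ does not by itself make the stopped jump term a martingale. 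This is precisely why the argument is routed through an abstract localizing sequence for the combined local martingale $\widetilde M$ together with the a priori moment estimate of Lemma~\ref{lem-bdd-moments} and the growth hypothesis \eqref{eq-u-growth-cond}, rather than by taking expectations term by term. A minor additional technicality is the standard justification of the time-dependent It\^o formula up to the boundary $t-s=0$, which uses continuity of the classical solution there.
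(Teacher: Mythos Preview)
Your proof is correct and follows the same approach as the paper: apply the generalized It\^o formula to $\Phi(s)\,u(t-s,X(s),\al(s))$, use \eqref{feynman-kac} to cancel the finite-variation part, localize, and pass to the limit by dominated convergence via \eqref{eq-u-growth-cond} and Lemma~\ref{lem-bdd-moments}(a). The only difference is that the paper localizes with the explicit stopping times $\tau_n=\inf\{s:|X(s)|\ge n\}$ and asserts that each $M_i(\cdot\wedge\tau_n)$ is a true martingale, whereas you use an abstract localizing sequence for the combined local martingale; your choice is slightly cleaner in that it sidesteps the verification for $M_3(\cdot\wedge\tau_n)$ that you correctly flag as delicate, while the dominated-convergence step is identical in both arguments.
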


 \begin{rem}\label{rem-about-classical}
A smooth function $u: [0,\infty) \times\rr^n \times \M \mapsto \rr$ is said to be a {\em classical solution} of \eqref{feynman-kac} if  (i) $u(\cdot, \cdot, i) \in C^{1,2}$ for each $i\in \M$, (ii) $u(t, \cdot, \cdot)$ belongs to the domain $\mathcal D_\op$ of the generator $\op$ for each $t\in [0,\infty)$, and (iii) $u$ satisfies both equations in \eqref{feynman-kac} in the classical sense.
\end{rem}

 \begin{proof} Suppose $u$ is a classical solution to \eqref{feynman-kac}.  Let $(X\cd,\alpha\cd)$ be the unique solution to \eqref{sw-jump-diffusion} and \eqref{Q-gen} (or equivalently, \eqref{sw-jump-diffusion} and \eqref{eq:jump-mg})
 with initial condition $(X(0),\alpha(0))=(x,i)$. For any $t>0$, we define for $0\le s \le t$,   $$M(s):= \exp\set{-\int_0^s c(X(r),\al(r))dr} u(t-s, X(s),\al(s)).$$ Then it follows from generalized It\^o's formula \eqref{eq:ito} and the first equation of \eqref{feynman-kac} that
 \begin{displaymath}
\begin{aligned}
M(s)-M(0)  =& \int_{0}^{s} \exp\set{-\int_0^v c(X(r),\al(r))dr} \biggl[\(-\frac{\partial }{\partial v}   + \op \)u(t-v, X(v),\al(v)) \\
 &  \hspace{.45in}-c(X(v),\alpha(v)) u(t-v, X(v), \alpha(v)  \biggl]dv + M_{1}(s) + M_{2}(s) + M_{3}(s), \\
 =& M_1(s) + M_2(s) + M_3(s),
\end{aligned}
\end{displaymath}
where \begin{displaymath}
\begin{aligned}
 M_1(s) = &  \int_0^s  \exp\!\set{-\int_0^r c(X(v),\al(v))dv}  \\
               & \hspace{1.8in} \times D_x u(t-r, X(r-),\al(r-)) \cdot \sigma(X(r-),\al(r-)) dW(r), \\
 M_2(s) =& \int_0^s \!\!\int_{\rr}  \exp\!\set{-\int_0^r c(X(v),\al(v))dv}\big[ u(t-r, X(r-), \al(s-)+ h(X(r-),\al(r-),z))   \\ & \hspace{1.3in} -u(t-r,X(r-),\al(r-))\big]\tilde N_{1}(dr, dz), \\
 M_3(s) = & \int_0^s\!\! \int_{\rr^n_0} \exp\!\set{-\int_0^r c(X(v),\al(v))dv} \big[u(t-r, X(r-) + \gamma(X(r-),\al(r-),z),\al(r-))  \\ &     \hspace{1.3in}
 - u(t-r, X(r-),\al(r-))\big]\tilde N(dr, dz).
\end{aligned}
\end{displaymath}
 Therefore $\set{M(s), 0 \le s \le t}$ is a local martingale. Put $\tau_n:= \inf\set{t\ge 0: \abs{X(t)} \ge n}$, $ n\in \mathbb N$. Then thanks to \eqref{ito-condition},  \eqref{eq-u-growth-cond}, and  Lemma \ref{lem-bdd-moments},  the processes $M_i(\cdot \wedge \tau_n), i =1, 2, 3$, are martingales; so is the process $M(\cdot\wedge \tau_n)$. Therefore, by the definition of $M$ and the optional sampling theorem, we obtain
\begin{displaymath}
\ex[M(s\wedge \tau_n)] =\ex[M(0\wedge \tau_n)]= u(t,x,i), \ \ \text{ for all } s\in [0,t].
\end{displaymath}
Owing to  \eqref{eq-u-growth-cond}, the definition of the process $M$, and the assumption that $c\ge 0$, for any $s\in [0, t]$, we have
$$|M(s\wedge \tau_n)| \le K\(1+ |X(s\wedge \tau_n)|^2\) \le K\(1+ \sup_{0\le s \le t}|X(s)|^2\).$$
Therefore by letting $n\to \infty$, we obtain from  the dominated convergence theorem and Lemma \ref{lem-bdd-moments}   that ${\mathbb E}[M(s)]= u(t,x,i), s\in[0,t]$.  In particular,
\eqref{stoch-soln-FK} follows.
 \end{proof}

\begin{thm}\label{thm-FK-terminal}
Assume {\rm (A1)} and for some $T >0$. Suppose that $u(\cdot,\cdot,i): [0,T]\times \rr^{n}\times \M \mapsto \rr$ is of class $C^{1,2}([0,T) \times \rr^{n}) \cap C([0,T]\times \rr^{n})$ for each $i \in \M$ and  $u(t, \cdot, \cdot) \in \mathcal D_{\op}$ for every $t\in [0,T]$ and satisfies the Cauchy  problem
\begin{equation}
\label{eq-Cauchy-terminal}
\left\{\!\!\begin{array}{rlll}
 \disp  \frac{\partial }{\partial t} u(t,x,i) + \op u(t,x,i) - c(t,x,i) u(t,x,i) & \!=&\! g(t,x,i),     &  (t,x,i) \in [0,T) \times \rr^{n} \times \M, \\
     u(T,x,i) & \! =& \! f(x,i) , &     (x,i) \in \rr^{n }\times \M,
\end{array}\right.
\end{equation}
and that satisfies the growth condition \begin{equation}
\label{eq2-u-growth-cond}
\abs{u(t,x,i)}  \le K (1+ \abs{x}^p), \text{ for all } t \in [0,T] \text{ and } (x,i) \in \rr^n \times \M, \quad 0 \le p < 2,
\end{equation} where
for each $i\in \M$, the functions $c(\cdot,\cdot, i) \ge 0$, $g (\cdot,\cdot, i)$, and $f(\cdot,i)$ are continuous and satisfy
\begin{equation}
\label{eq-g-growth-cond}
|g(t,x,i) | + | f(x,i) |\le K (1+ |x|^{2}), \quad \forall t \in[0,T], x\in \rr^{n},
\end{equation} and for some $K>0$.
Let $(X,\alpha)= (X^{t,x,i},\alpha^{t,x,i})$ be the solution to
\eqref{sw-jump-diffusion} and \eqref{Q-gen} $($or equivalently, \eqref{sw-jump-diffusion} and \eqref{eq:jump-mg}$)$
with $(X(t),\alpha(t)) =(x,i)$. Then we have 
\begin{equation}
\label{eq-FK-stoch-soln-terminal}
\begin{aligned}
u(t,x,i) = &  \ex_{t,x,i}   \biggl[ e^{-\int_{t}^{T} c(r, X(r),\alpha(r)) dr } f(X(T),\alpha(T))    \\ &  \qquad -   \int_{t}^{T} e^{-\int_{t}^{s} c(r, X(r),\alpha(r)) dr }  g(s,X(s),\alpha(s)) ds   \biggl], \quad  0\le t \le T.
\end{aligned}\end{equation}
\end{thm}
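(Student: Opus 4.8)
The plan is to follow the strategy used for \thmref{thm-feyman1}, adapting it to the backward (terminal-value) Cauchy problem and incorporating the source term $g$. Fix $(t,x,i)$ and let $(X,\alpha)=(X^{t,x,i},\alpha^{t,x,i})$ be the solution of \eqref{sw-jump-diffusion} and \eqref{Q-gen} (or equivalently \eqref{sw-jump-diffusion} and \eqref{eq:jump-mg}) with $(X(t),\alpha(t))=(x,i)$. For $t\le s\le T$ set $C(s):=\int_t^s c(r,X(r),\alpha(r))\,dr$ and define
\begin{displaymath}
M(s):= e^{-C(s)}\,u(s,X(s),\alpha(s)) - \int_t^s e^{-C(r)}\, g(r,X(r),\alpha(r))\,dr .
\end{displaymath}
The first step is to show that $\set{M(s):t\le s\le T}$ is a local martingale. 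Applying the generalized It\^o formula \eqref{eq:ito} in its time-dependent form to $u(\cdot,X(\cdot),\alpha(\cdot))$ together with the product rule for the absolutely continuous factor $e^{-C(s)}$ (note $\frac{d}{ds}e^{-C(s)}=-c(s,X(s),\alpha(s))e^{-C(s)}$), the drift part of $e^{-C(s)}u(s,X(s),\alpha(s))$ equals $e^{-C(s)}\big[\tfrac{\partial}{\partial s}u+\op u-c\,u\big](s,X(s-),\alpha(s-))$, which by the first line of \eqref{eq-Cauchy-terminal} is exactly $e^{-C(s)}g(s,X(s),\alpha(s))$. Hence it cancels the $ds$-term produced by differentiating the integral in the definition of $M$, so $M(s)-M(t)$ equals a sum $M_1(s)+M_2(s)+M_3(s)$ of stochastic integrals of the same three types as in the proof of \thmref{thm-feyman1} (against $dW$, $\tilde N_1(ds,dz)$, and $\tilde N(ds,dz)$, now each carrying the extra bounded factor $e^{-C(r)}\le 1$); thus $M$ is a local martingale, and $M(t)=u(t,x,i)$.

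Next I would localize. Put $\tau_n:=\inf\set{s\ge t:\abs{X(s)}\ge n}$. On $[t,T\wedge\tau_n]$ the path of $X$ is confined to the ball $\{\abs{x}\le n\}$, so using the local boundedness of $D_xu$ and $\sigma$, the quadratic $\nu$-integrability of $\gamma$ in (A1), and \lemref{lem-bdd-moments}, each $M_j(\cdot\wedge\tau_n)$ is a true (square-integrable) martingale; in particular so is $M(\cdot\wedge\tau_n)$. The optional sampling theorem \cite[Theorem 1.3.22]{Karatzas-S} then gives, for every $n\in\mathbb N$ and every $\theta\in[t,T)$,
\begin{displaymath}
\ex_{t,x,i}\big[M(\theta\wedge\tau_n)\big] = \ex_{t,x,i}\big[M(t)\big] = u(t,x,i).
\end{displaymath}
Working first with $\theta<T$ is convenient because $u$ is only assumed $C^{1,2}$ on $[0,T)\times\rr^n$; the value at $T$ is recovered afterwards from the continuity of $u$ up to $t=T$. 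I expect the estimate for the compensated Poisson integral $M_3$ to be the part requiring the most care, exactly as in the proof of \thmref{thm-feyman1}: one bounds $\int_{\rr^n_0}\abs{\gamma(X(s-),\alpha(s-),z)}^2\nu(dz)\le\kappa(1+\abs{X(s-)}^2)$ and invokes $\ex_{t,x,i}\big[\sup_{[0,T]}\abs{X}^2\big]<\infty$ from \lemref{lem-bdd-moments}(a) to conclude that $M_3(\cdot\wedge\tau_n)$ has zero mean.

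Finally I would pass to the limit. Non-explosion of $(X,\alpha)$ (again a consequence of the moment bound in \lemref{lem-bdd-moments}) gives $\tau_n\uparrow\infty$ a.s., so $M(\theta\wedge\tau_n)\to M(\theta)$ a.s. as $n\to\infty$. For the domination, the subquadratic bound \eqref{eq2-u-growth-cond} with $p<2$ yields $\big|e^{-C(\theta\wedge\tau_n)}u(\theta\wedge\tau_n,X(\theta\wedge\tau_n),\alpha(\theta\wedge\tau_n))\big|\le K\big(1+\sup_{[0,T]}\abs{X}^2\big)$, while \eqref{eq-g-growth-cond} gives $\big|\int_t^{\theta\wedge\tau_n}e^{-C(r)}g(r,X(r),\alpha(r))\,dr\big|\le KT\big(1+\sup_{[0,T]}\abs{X}^2\big)$; both majorants are integrable by \lemref{lem-bdd-moments}(a). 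The dominated convergence theorem then gives $u(t,x,i)=\ex_{t,x,i}[M(\theta)]$ for each $\theta\in[t,T)$, and letting $\theta\uparrow T$ (using continuity of $u$ at $T$, the same integrable majorant, and dominated convergence once more) yields $u(t,x,i)=\ex_{t,x,i}[M(T)]$. Substituting the terminal condition $u(T,\cdot,\cdot)=f$ into $M(T)$ and rearranging produces precisely \eqref{eq-FK-stoch-soln-terminal}. As an alternative route, one could set $v(s,x,i):=u(T-s,x,i)$ to transform \eqref{eq-Cauchy-terminal} into a forward equation and quote \thmref{thm-feyman1}, but because of the inhomogeneous term $g$ the direct argument above is essentially as short.
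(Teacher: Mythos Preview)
Your proof is correct and follows the same overall plan as the paper: apply the generalized It\^o formula to $e^{-C(s)}u(s,X(s),\alpha(s))$, localize with $\tau_n=\inf\{s\ge t:|X(s)|\ge n\}$, use optional sampling, and pass to the limit. The one substantive difference is in the passage $n\to\infty$. The paper splits the stopped term at $T\wedge\tau_n$ on the events $\{\tau_n>T\}$ and $\{\tau_n\le T\}$; for the latter it bounds $\ex_{t,x,i}[|X(\tau_n\wedge T)|^{p} I_{\{\tau_n\le T\}}]$ via H\"older and Chebyshev, producing a factor $n^{p-2}\to 0$, and this is precisely where the strict inequality $p<2$ is used. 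You instead dominate $|M(\theta\wedge\tau_n)|$ by $K'(1+\sup_{[0,T]}|X|^2)$ and invoke dominated convergence directly --- the same device the paper uses in the proof of \thmref{thm-feyman1}. Your route is shorter (and, incidentally, only needs $p\le 2$); the paper's route makes the role of $p<2$ visible and sets up the extension in \remref{rem32-differences} and the subsequent corollary to exponents $\mu\ge 2$ under an extra moment hypothesis on $\gamma$. Your separate $\theta\uparrow T$ step, to respect that $u$ is $C^{1,2}$ only on $[0,T)$, is a point the paper glosses over. One small wording issue: ``the path of $X$ is confined to the ball $\{|x|\le n\}$'' on $[t,T\wedge\tau_n]$ is not literally true for a jump process at the exit time; what holds (and what the stochastic integrands actually use) is $|X(s-)|\le n$ for $s\le\tau_n$.
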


\begin{proof} Define $\tau_{n}$ as in the proof of Theorem  \ref{thm-feyman1}. Then as before, we apply It\^o's formula to the process $u(s,X(s),\alpha(s)) \exp\set{-\int_{t}^{s} c(r, X(r),\alpha(r)) dr}, s \in [t,T]$ and then take expectations to obtain
\begin{equation}\label{eq-u-3terms}
\begin{aligned}
 u(t,x,i)  =     & - \ex_{t,x,i} \left[ \int_{t}^{\tau_{n}\wedge T} e^{- \int_{t}^{s} c(r, X(r), \alpha(r)dr}g(s,X(s),\alpha(s)) ds\right]  \\
    &  + \ex_{t,x,i}\left[  e^{-\int_{t}^{T}c(r, X(r),\alpha(r)) dr }f(X(T),\alpha(T))  I_{\set{\tau_{n}  >  T}} \right] \\
    & + \ex_{t,x,i}\left[  e^{-\int_{t}^{\tau_{n}}c(r, X(r),\alpha(r)) dr } u(\tau_{n}, X(\tau_{n}), \alpha(\tau_{n}))I_{\set{\tau_{n }  \le T}} \right].
\end{aligned}
\end{equation}
 Since $c\cd \ge 0$ and  $\tau_{n} \to \infty $ a.s. as $n \to \infty$, the first term of \eqref{eq-u-3terms} converges to $$ - \ex_{t,x,i} \left[ \int_{t}^{  T} e^{- \int_{t}^{s} c(r, X(r), \alpha(r)dr}g(s,X(s),\alpha(s)) ds\right] $$  by \eqref{eq-g-growth-cond}, Lemma \ref{lem-bdd-moments}, and the dominated convergence theorem.
 Similarly, we can show that as $n \to \infty$, the second term of \eqref{eq-u-3terms} converges to
$$ \ex_{t,x,i}\left[  e^{-\int_{t}^{T}c(r, X(r),\alpha(r)) dr }f(X(T),\alpha(T)) \right].$$ Let
us analyze the third term of  \eqref{eq-u-3terms}. Owing to \eqref{eq2-u-growth-cond}, it is bounded in absolute value by  $$ \ex_{t,x,i} \left[ \abs{u(\tau_{n}, X(\tau_{n}), \alpha(\tau_{n})) }I_{\set{\tau_{n }  \le T}} \right]  \le K \ex_{t,x,i} \left[ \abs{X(\tau_{n}\wedge T)}^{p} I_{\set{\tau_{n }  \le T}}  \right] + K \pr_{t,x,i} \set{\tau_{n} \le T}. $$ Certainly we have $\pr_{t,x,i} \set{\tau_{n} \le T} \to 0$ as $n \to \infty$.  Furthermore, since $p < 2$ in \eqref{eq2-u-growth-cond}, by the H\"older inequality, Lemma \ref{lem-bdd-moments}, and the Chebyshev inequality,  we have
 $$\begin{aligned}\ex_{t,x,i}\left[  \abs{X(\tau_{n}\wedge T)}^{p} I_{\set{\tau_{n }  \le T}}   \right] &   \le  \(\ex_{t,x,i} [\abs{X(\tau_{n} \wedge T)}^{2}]\)^{p/2} \( \pr_{t,x,i} \set{\tau_{n} \le T}\)^{ (2-p)/2} \\
 & \le  \(\ex_{t,x,i} \biggl[\sup_{0\le s \le T}\abs{X(s)}^{2}\biggl]\)^{p/2} \! \!\(  \pr_{t,x,i} \biggl\{\sup_{0\le s \le T} |X(s)| \ge n\biggl\}\)^{(2-p)/2} \\
 & \le K_{1} \( \frac{\ex_{t,x,i} \Bigl [ \sup_{0\le s \le T} |X(s)|^{2}\Bigl]}{n^{2}} \)^{(2-p)/2} \\
 & \le  K_{2} n^{p-2} \to 0 \end{aligned}$$  as    $n \to \infty$, where $K_{1}$ and $K_{2} $ are positive constants  independent of $n$. Thus the third term of \eqref{eq-u-3terms} goes to $0$ as $n \to \infty$. Finally we obtain \eqref{eq-FK-stoch-soln-terminal} by combining the above estimates into \eqref{eq-u-3terms}.
\end{proof}

\begin{rem}\label{rem32-differences} If the L\'evy measure $\nu \cd\equiv 0$,  then the process $X\cd$ is
 in fact a regime-switching diffusion and has continuous sample paths \cite{YZ-10}. In such a case, similar to \cite[Theorem 5.7.6]{Karatzas-S}, we can relax  assumption \eqref{eq2-u-growth-cond}  to
\begin{equation}
\label{eq3-u-growth-cond}
   \abs{u(t,x,i)} \le K (1+ |x|^\mu), \text{ for all }(t,x,i) \in [0,T] \times \rr^n\times \M,
\end{equation}where $\mu\ge 2$. Indeed, since
 $|X(\tau_{n})| = n$,  one can bound the third term of \eqref{eq-u-3terms}
 by $K (1+ n^\mu) \pr_{t,x,i} \{\tau_{n} \le T\}$. Furthermore,
  in view of \cite[Proposition 2.2.3]{YZ-10}, for any $\iota > \mu$, we have $$ \pr_{t,x,i} \{\tau_{n} \le T\} = \pr_{t,x,i}\set{\sup_{0\le s \le T} |X(s)| \ge n} \le \frac{\ex_{t,x,i}\left[\sup_{0\le s \le T} |X(s)|^{ \iota}\right]}{n^{\iota}} \le K n^{-\iota},$$ where $K>0$ is independent of $n$.
This shows that the third term of \eqref{eq-u-3terms}  converges to $0$ as $n\to \infty$.
However, in the presence of a
L\'evy measure $\nu\cd$,  the inequality $|X(\tau_{n})| \le n$ is not necessarily true. Thus in general  we cannot relax \eqref{eq2-u-growth-cond} and apply the arguments of \cite{Karatzas-S} directly.

If we relax \eqref{eq2-u-growth-cond} to  \eqref{eq3-u-growth-cond} for some $\mu \ge 2$ and  suppose also \eqref{eq-g-growth-cond} is replaced by \begin{equation}
\label{eq2-g-growth-cond}
|g(t,x,i) | + | f(x,i) |\le K (1+ |x|^{\tilde p}), \quad \forall t \in[0,T], x\in \rr^{n}, i \in \M,
\end{equation} where $K  $ is a  positive constant and $\tilde p > \mu$.  Then the stochastic representation   \eqref{eq-FK-stoch-soln-terminal} is still valid  if   we   assume in addition that  \eqref{eq-gamma-p} holds  for  $\tilde p > \mu$.

In fact, if   \eqref{eq-gamma-p} holds for  $\tilde p > \mu$, then Lemma \ref{lem-bdd-moments}, part (b) reveals that we still have the moment bound $\ex[\sup_{0\le s \le T} |X(s)|^{\tilde p}] \le K < \infty$, where $K= K(T,\tilde p)$. Thus similar to the proof of Theorem \ref{thm-FK-terminal}, we compute  $$ \ex[|X(\tau_{n} \wedge T)|^{\mu} I_{\set{\tau_{n} \le T}}] \le \(\ex[|X(\tau_{n}\wedge T)|^{\tilde p}]\)^{\mu/\tilde p}  \(\pr\set{\tau_{n} \le T}\)^{(\tilde p - \mu)/ \tilde p} \le K n^{-(\tilde p -\mu)} \to 0,$$ as $n \to \infty$.
This, together with  almost the same argument as that in the proof of Theorem \ref{thm-FK-terminal}, helps us to establish
 the stochastic representation \eqref{eq-FK-stoch-soln-terminal}. We summarize the above discussion into the following corollary.
\end{rem}

\begin{cor}
Assume   {\em (A1)}. Suppose   that \eqref{eq3-u-growth-cond}, \eqref{eq2-g-growth-cond}, and \eqref{eq-gamma-p} hold for some positive constants $\tilde p > \mu \ge 2$. Then the conclusion of Theorem  \ref{thm-FK-terminal} continues to hold.
\end{cor}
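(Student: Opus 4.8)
The plan is to rerun the proof of Theorem~\ref{thm-FK-terminal} with exactly two changes: (i) invoke Lemma~\ref{lem-bdd-moments}(b) in place of part~(a), which is legitimate because \eqref{eq-gamma-p} is now assumed with exponent $\tilde p > \mu \ge 2$ and therefore yields the stronger moment bound $\ex_{t,x,i}\bigl[\sup_{0\le s\le T}|X(s)|^{\tilde p}\bigr] \le K < \infty$; and (ii) replace the H\"older/Chebyshev estimate of the ``boundary'' term of \eqref{eq-u-3terms} by a sharper one that exploits this higher moment. The regularity hypotheses on $u$ (class $C^{1,2}\cap C$ in the first two arguments, $u(t,\cdot,\cdot)\in\mathcal D_\op$ for each $t$, and solving the Cauchy problem \eqref{eq-Cauchy-terminal}) are carried over verbatim from Theorem~\ref{thm-FK-terminal}.

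Concretely, fix $(t,x,i)$ and set $\tau_n$ as in the proof of Theorem~\ref{thm-FK-terminal}. Applying the generalized It\^o formula \eqref{eq:ito} to $u(s,X(s),\alpha(s))\exp\{-\int_t^s c(r,X(r),\alpha(r))\,dr\}$ on $[t,\tau_n\wedge T]$ --- the membership $u(s,\cdot,\cdot)\in\mathcal D_\op$ and the equation \eqref{eq-Cauchy-terminal} being used exactly as before to collapse the Lebesgue-integral part to the single contribution of $g$ --- and then taking expectations (the three stopped local-martingale terms vanishing by the same reasoning as in Theorem~\ref{thm-FK-terminal}, which rests only on (A1) and the moment bounds), we recover the identity \eqref{eq-u-3terms}. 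The derivation of \eqref{eq-u-3terms} uses only (A1) and the regularity of $u$, so nothing in that step needs to be redone.

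It then remains to pass to the limit $n\to\infty$ in the three terms of \eqref{eq-u-3terms}. The first two converge to the two summands of \eqref{eq-FK-stoch-soln-terminal} by \eqref{eq2-g-growth-cond}, the bound $\ex_{t,x,i}\bigl[\sup_{0\le s\le T}|X(s)|^{\tilde p}\bigr]<\infty$, the fact that $c\cd\ge 0$, and dominated convergence, exactly as in Theorem~\ref{thm-FK-terminal}. For the third term, on $\{\tau_n\le T\}$ we have $|u(\tau_n,X(\tau_n),\alpha(\tau_n))|\le K\bigl(1+|X(\tau_n\wedge T)|^\mu\bigr)$ by \eqref{eq3-u-growth-cond}; since jumps may push $|X(\tau_n)|$ beyond $n$ (so the pure-diffusion identity $|X(\tau_n)|=n$ used in Remark~\ref{rem32-differences} is no longer available), I would instead apply H\"older's inequality with conjugate exponents $\tilde p/\mu$ and $\tilde p/(\tilde p-\mu)$ to get
$$\ex_{t,x,i}\bigl[|X(\tau_n\wedge T)|^\mu I_{\{\tau_n\le T\}}\bigr]\le\bigl(\ex_{t,x,i}[|X(\tau_n\wedge T)|^{\tilde p}]\bigr)^{\mu/\tilde p}\bigl(\pr_{t,x,i}\{\tau_n\le T\}\bigr)^{(\tilde p-\mu)/\tilde p},$$
bound the first factor by $\ex_{t,x,i}[\sup_{0\le s\le T}|X(s)|^{\tilde p}]\le K$, and use Chebyshev's inequality together with Lemma~\ref{lem-bdd-moments}(b) to get $\pr_{t,x,i}\{\tau_n\le T\}=\pr_{t,x,i}\{\sup_{0\le s\le T}|X(s)|\ge n\}\le Kn^{-\tilde p}$. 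Hence the third term is $O(n^{-(\tilde p-\mu)})\to 0$, precisely because $\tilde p>\mu$. Combining the three limits in \eqref{eq-u-3terms} yields \eqref{eq-FK-stoch-soln-terminal}.

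The only place where the new hypotheses do any work --- and the step I expect to be the crux --- is this last estimate: the failure of the overshoot identity $|X(\tau_n)|=n$ for jump processes forces an interpolation between the $L^\mu$ quantity one must control and the higher $L^{\tilde p}$ moment supplied by \eqref{eq-gamma-p} via Lemma~\ref{lem-bdd-moments}(b), and this produces a useful decay rate exactly when the gap $\tilde p-\mu$ is strictly positive. Everything else is a transcription of the proof of Theorem~\ref{thm-FK-terminal}.
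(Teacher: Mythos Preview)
Your proposal is correct and follows essentially the same approach as the paper: the argument is given in Remark~\ref{rem32-differences}, where the paper reruns the proof of Theorem~\ref{thm-FK-terminal}, invokes Lemma~\ref{lem-bdd-moments}(b) to obtain $\ex\bigl[\sup_{0\le s\le T}|X(s)|^{\tilde p}\bigr]<\infty$, and then bounds the third term of \eqref{eq-u-3terms} via the same H\"older/Chebyshev combination you describe, yielding the decay $Kn^{-(\tilde p-\mu)}\to 0$. Your write-up is in fact slightly more detailed than the paper's, which compresses the entire argument into two sentences.
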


\begin{exm} In this example, we demonstrate that Theorem \ref{thm-FK-terminal} can be applied in mathematical finance.
We consider a generalized Black-Scholes market that consists of two assets: a bond and a stock. The price of the bond evolves according to the equation
\begin{equation}
\label{bond}
dB(t)= r(\alpha(t))B(t)dt, \ \ B(0) = 1,
\end{equation} where $\al\cd$ is a continuous-time Markov chain with generator $Q= (q_{ij})$ and a finite state space $\M = \set{1, \dots, m}$, and  $r: \M \mapsto \rr_{+}$.  Hence  the discounted process is given by  $ D(t) = \exp\{- \int_{0}^{t} r (\alpha(r))dr\}.$
Suppose that under some risk neutral measure $\mathbb Q$, the price of  the stock is modeled by the stochastic differential equation
\begin{equation}
\label{stock} \begin{cases}
dS(t)=S({t-}) \left[ r(\alpha({t-})) dt + \sigma( \alpha({t-}))  dW^{\Q}(t) +  \disp   \int_{\rr_0}  \gamma(  \alpha({t-}),  z) \tilde N^{\mathbb Q}(dt, dz)\right], & \\
S(0) =s_{0}> 0, & \end{cases}
\end{equation} where  $W^{\Q}$ is a one-dimensional Brownian motion and $ N^{\Q} $ is a Poisson random measure with compensator $\tilde N^{\Q} (t, E) :=  N^{\Q} (t,E) - t \nu^{\Q} (E) $ under the risk neutral measure $\Q$, in which $\nu^{\Q}$ is a L\'evy measure satisfying $\int_{\rr_{0}}
(1 \wedge |z|^{2}) \nu^{\Q} (dz) < \infty$.  Such a risk neutral measure $\Q$ can be found using the Esscher transform (\cite{Elliott,gerber1994option}). For simplicity, we assume that $\alpha\cd$, $W^{\Q}\cd$, and $N^{\Q}\cd$ are independent. For $i \in \M$, $r_{i}= r(i), \sigma_{i}=\sigma(i)$ are positive constants and $\gamma(i, \cdot)$ is a real-valued function satisfying $\int_{\rr_{0}} \gamma(i,z)^{2}\nu^{\Q} (dz) \le K < \infty$. Furthermore, we assume that $\gamma (i ,z) > -1$ for all $i \in \M$ and $z \in \rr_{0}$.

Note that for this example, assumption (A1) is satisfied. Therefore \eqref{stock} has a unique strong solution and  the moment bound \eqref{eq-momen-bd} holds.
Using the generalized It\^o formula \eqref{eq:ito}, we obtain $S(t) = S(0)  \exp(X(t)),$ where
$$\begin{aligned}X(t) = &  \int_{0}^{t} \! \left[ r(\alpha(v-)) - \frac{1}{2} \sigma^{2}(\alpha(v-)) + \int_{\rr_{0}} [\log(1+ \gamma(\al(v-),z)) - \gamma(\al(v-),z)]  \nu^{\Q} (dz) \right]\! dv \\
 & + \int_{0}^{t} \sigma(\al(v-)) d W^{\Q}(v) + \int_{0}^{t} \int_{\rr_{0}}   \log(1+  \gamma(\al(v-),z))   \tilde N^{\Q} (dv, dz).\end{aligned}$$

For a European type contingent claim with payoff $h(S(T),\al(T))$ at the time of expiration $T> 0$, according to the fundamental theorem  of asset  pricing \cite{delbaen1994general},  $$V_{t} := \ex_{\Q}\left[e^{-\int_{t}^{T} r(\alpha(r))dr} h(S(T),\al(T)) \big| \F_{t}\right]$$ gives an arbitrage free price of the derivative at time $t\in [0, T]$, where $\mathbb F= \set{\F_{t},  0\le t \le T}$ is the natural filtration of $S$.  Suppose that for each $i\in \M$,  the function $h(\cdot,i)$ is continuous
 and satisfies the growth condition $0 \le  h(s,i)  \le K (1+ s^{2}) $ for
  some $K>0$ and all $(s,i) \in (0, \infty) \times \M$. By the Markov property, we can write $V_{t} = V(t, S(t), \al(t))$, where \begin{equation}
\label{eq-V-option-price}
 V(t, s, i) =\ex_{\Q}\left[e^{-\int_{t}^{T} r(\alpha(v))dv} h(S(T),\al(T)) \big| S(t) = s, \al(t) = i\right].
\end{equation}

By virtue of Theorem \ref{thm-FK-terminal}, any solution satisfying the growth condition \eqref{eq2-u-growth-cond} of the terminal value problem
\begin{equation}
\label{eq-pide-example}
\left\{\!\!\begin{array}{rlll}\disp
   \frac{\partial }{\partial t} u(t,s,i) + \op^{\Q} u(t,s,i) - r_{i} u(t,s,i) & \!=&\! 0,     &  (t,s,i) \in [0,T) \times (0, \infty)  \times \M, \\
     u(T,s,i) & \! =& \! h(s,i) , &     (s,i) \in  (0, \infty) \times \M,
\end{array}\right.
\end{equation} can be represented by the right hand side of \eqref{eq-V-option-price}, where $\op^{\Q}$ is defined as follows: $$
\begin{aligned}\op^{\Q} f(s,i) = & D_{s} f(s,i)  r_{i } s +  \frac{1}{2}D_{ss} f(s,i) \sigma_{i}^{2} s^{2 } \\ &   + \int_{\rr_{0}} [f(s+ s \gamma(i,z),i) -f(s,i) - D_{s} f(s,i) s \gamma(i,z)] \nu^{\Q} (dz),\end{aligned}
$$ in which $D_{s}f(s,i)$ and $D_{ss} f(s,i) $ denote the first and second order partial derivatives of $f$ with respect to the variable $s$, respectively.  On the other hand, if one can show that the function $V$ defined in \eqref{eq-V-option-price} is a classical (or even a viscosity) solution to \eqref{eq-pide-example}, then we can find the arbitrage free price $V(t, S(t),\al(t))$ by solving \eqref{eq-pide-example}.
\end{exm}

 \subsection{A Dirichlet Problem}\label{sec:1st}
 In this subsection, we work with a bounded and open domain $D \subset \rr^{n}$ with  boundary $\partial D$ and closure  $\lbar D= D \cup \partial D$.
  Similar to Section \ref{sect-cauchy}, our goal is to obtain a stochastic representation for the classical  solution $u$ (in the sense of Remark \ref{rem-about-classical}) to the Dirichlet  problem
 \begin{equation}\label{bvp} \begin{cases}
  \op u(x,i)  -c(x,i) u(x,i) = \xi(x,i), &  (x,i) \in D\times \M,  \\
  u(x,i)= \eta(x,i), &   (x,i) \in   D^c \times \M,
\end{cases}\end{equation}
 where
 for each $i\in \M$,
 the functions $c(\cdot,i) \ge 0$, $\xi(\cdot, i)$, and $\eta(\cdot,i) $ are continuous on their domains.    To proceed, we first present the following lemma, which asserts that starting from a point $(x,i) \in D \times \M$, the regime-switching jump diffusion given by
 \eqref{sw-jump-diffusion} and \eqref{Q-gen} (or equivalently, \eqref{sw-jump-diffusion} and \eqref{eq:jump-mg})
 will exit from $D$ in finite time a.s.

\begin{lem}\label{lem-3.8}
\label{lem-tau-finite} Assume {\em (A1)}. In addition,  suppose that for some $\ell \in \set{1, 2, \dots,n}$,   \begin{equation}
\label{eq-lem3.8-cond-a>0}
 \wdt a := \min_{x \in \lbar D, i\in \M} a_{\ell\ell} (x,i) > 0.
\end{equation}
Let  $\tau$ be the first exit time of the regime-switching jump
diffusion of   \eqref{sw-jump-diffusion} and \eqref{Q-gen} $($or equivalently, \eqref{sw-jump-diffusion} and \eqref{eq:jump-mg}$)$
from $D$:   $$\tau: = \inf\{t\ge 0: X^{x,i}(t) \not
\in D \}.$$    Then $\tau <
\infty$ a.s. and $\ex[\tau] < \infty$.
\end{lem}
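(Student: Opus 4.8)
The plan is to use the standard exponential-supermartingale (comparison) argument: build an auxiliary function of the $\ell$-th coordinate that, under the nondegeneracy condition \eqref{eq-lem3.8-cond-a>0}, has a strictly negative $\op$-action on $\lbar D$, so that $\ex_{x,i}[\tau\wedge n]$ stays bounded uniformly in $n$.  Concretely, since $D$ is bounded, fix $\lambda>0$ large and a constant $c_0$ so that the function $\varphi(x)=c_0-e^{\lambda x_\ell}$ is positive on $\lbar D$, and compute, for $(x,i)\in D\times\M$,
\begin{displaymath}
\op\varphi(x,i)= -e^{\lambda x_\ell}\Bigl[\lambda b_\ell(x,i)+\tfrac12\lambda^2 a_{\ell\ell}(x,i)\Bigr]-\int_{\rr^n_0}e^{\lambda x_\ell}\bigl(e^{\lambda\gamma_\ell(x,i,z)}-1-\lambda\gamma_\ell(x,i,z)\bigr)\nu(dz),
\end{displaymath}
where the switching term vanishes because $\varphi$ does not depend on $i$.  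Using \eqref{ito-condition} (which bounds $b_\ell$, $a_{\ell\ell}$ on the bounded set $\lbar D$ and controls $\int|\gamma|^2\nu(dz)$), the condition $\tilde a=\min_{\lbar D\times\M}a_{\ell\ell}>0$, and the elementary bound $|e^{u}-1-u|\le \tfrac12 u^2 e^{|u|}$, one sees that the bracketed term is dominated by the $\tfrac12\lambda^2\tilde a$ piece once $\lambda$ is chosen large enough, so $\op\varphi(x,i)\le -\theta<0$ on $D\times\M$ for some constant $\theta>0$.  (Here one should restrict attention to, say, $|\gamma_\ell|\le 1$ and absorb the remaining jumps; since $\nu(\{|z|>1\})<\infty$ by \eqref{levy-finite-B^{c}} and $\int|\gamma|^2\nu(dz)$ is finite, the jump integral is finite and controllable — this is the only place that needs a little care.)

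Next I would apply Dynkin's formula.  By the first corollary after Proposition~\ref{prop-dynkin} (extended, as in the second corollary, to $C^2$ functions with bounded derivatives — $\varphi$ has bounded derivatives on a neighborhood of the bounded set $\lbar D$, and one may multiply by a cutoff to get a genuine $C_c^2$ function agreeing with $\varphi$ on $\lbar D$), Dynkin's formula \eqref{Dynkin} applies to the stopping time $\tau\wedge n$, giving
\begin{displaymath}
\ex_{x,i}\bigl[\varphi(X(\tau\wedge n))\bigr]=\varphi(x)+\ex_{x,i}\Bigl[\int_0^{\tau\wedge n}\op\varphi(X(s-),\al(s-))\,ds\Bigr]\le \varphi(x)-\theta\,\ex_{x,i}[\tau\wedge n],
\end{displaymath}
since the integrand stays in $D\times\M$ for $s<\tau$.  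As $\varphi\ge 0$ on $\lbar D$ this yields $\ex_{x,i}[\tau\wedge n]\le \varphi(x)/\theta$ for every $n$; letting $n\to\infty$ by monotone convergence gives $\ex_{x,i}[\tau]\le\varphi(x)/\theta<\infty$, hence $\tau<\infty$ a.s.

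A technical subtlety I expect to be the main obstacle — and the reason for the cutoff argument above — is that $X$ has jumps, so at time $\tau$ the process may land far outside $\lbar D$, where the globally defined $\varphi(x)=c_0-e^{\lambda x_\ell}$ need not be nonnegative (indeed $e^{\lambda x_\ell}$ is unbounded).  To handle this cleanly I would work with a modified function $\tilde\varphi\in C_c^2(\rr^n)$ that equals $\varphi$ on $\lbar D$ and is nonnegative and bounded everywhere; then Dynkin's formula applies unconditionally, the drift estimate $\op\tilde\varphi\le-\theta$ is only needed on $D\times\M$ (where $\tilde\varphi=\varphi$), and the boundary term $\ex_{x,i}[\tilde\varphi(X(\tau\wedge n))]\ge 0$ still goes through.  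One must also verify $\tilde\varphi\in\mathcal D_\op$, which follows from Proposition~\ref{prop-dynkin} together with \eqref{add-gamma-conds} (or directly, since $\tilde\varphi$ is bounded with bounded derivatives and $\int_{|z|\le 1}|\gamma|^2\nu(dz)<\infty$).  With that in place, the rest is the routine bound just described, and $\ex_{x,i}[\tau]<\infty$ in particular forces $\tau<\infty$ a.s.
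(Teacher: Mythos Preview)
Your overall strategy is right, and in the pure-diffusion setting the exponential comparison function is the standard device. But the jump term creates a genuine gap in your sketch, stemming from the fact that $\op$ is a \emph{nonlocal} operator.

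First, the uncutoff function $\varphi(x)=c_0-e^{\lambda x_\ell}$ need not belong to $\mathcal D_\op$ at all: the integrability condition in the definition of $\mathcal D_\op$ requires $\int_{\rr_0^n}e^{\lambda x_\ell}\bigl(e^{\lambda\gamma_\ell(x,i,z)}-1-\lambda\gamma_\ell(x,i,z)\bigr)\nu(dz)<\infty$, which amounts to an exponential moment of $\gamma_\ell$ under $\nu$. Assumption (A1) only gives $\int|\gamma|^2\nu(dz)<\infty$, so this may fail. You partly anticipate this with the cutoff, but your estimate $|e^u-1-u|\le\tfrac12 u^2 e^{|u|}$ is applied to $\varphi$, not $\tilde\varphi$, and does not yield a finite bound under (A1) alone.

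Second, and more importantly, once you pass to the cutoff $\tilde\varphi\in C_c^2$ with $\tilde\varphi=\varphi$ on $\lbar D$, the key inequality $\op\tilde\varphi\le-\theta$ on $D\times\M$ does \emph{not} follow from the corresponding inequality for $\varphi$. The local (drift, diffusion, switching) parts of $\op\tilde\varphi(x,i)$ agree with those of $\op\varphi(x,i)$ for $x\in D$, but the jump integral $\int[\tilde\varphi(x+\gamma)-\tilde\varphi(x)-D\tilde\varphi(x)\cdot\gamma]\,\nu(dz)$ probes the values of $\tilde\varphi$ at $x+\gamma(x,i,z)$, which can lie arbitrarily far outside $\lbar D$. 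So the helpful sign $e^{\lambda\gamma_\ell}-1-\lambda\gamma_\ell\ge0$ is lost after the cutoff, and you must bound this term directly for $\tilde\varphi$---which your proposal does not do. (It can be salvaged: if you also arrange $\tilde\varphi\le c_0$ globally, the large-jump contribution is at worst $e^{\lambda x_\ell}\cdot O(\lambda)$, still beaten by the $\tfrac12\lambda^2\tilde a\,e^{\lambda x_\ell}$ diffusion term. But this is the heart of the proof, not a side technicality.)

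The paper's proof sidesteps the issue by building a compactly supported test function from the start: it takes $\phi(x,i)=\vphi(x_\ell-2R)$ with $\vphi(z)=K-z^{2p}$ on $|z|\le\beta$ (extended to $0$ for $|z|\ge\beta+1$), nonnegative, symmetric, and nonincreasing on $[0,\infty)$. Concavity forces the jump integrand to be $\le0$ whenever the jump lands in $\{|x_\ell+\gamma_\ell-2R|\le\beta\}$; for jumps landing outside, monotonicity gives $\phi(x+\gamma)\le\phi(x)$, so only the gradient term survives and is controlled by $\int|\gamma|^2\nu(dz)$ via the inclusion $\{|x_\ell+\gamma_\ell-2R|>\beta\}\subset\{|\gamma_\ell|>\beta-3R\}$. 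The resulting bound on the jump term is $O(p)$, dominated by the $O(p^2)$ diffusion term for large $p$. This careful splitting according to where $x+\gamma$ lands is exactly what your sketch is missing.
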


\begin{proof}
 We proceed to
 use the idea in \cite{wee2000recurrence}.
Since $D$ is bounded, there exists an $R>0$ such that $|x| \le R$ for all $x\in D$. Furthermore, let us choose positive constants $\beta > 3R$ and $K > \beta^{2p}$,
where $p>1$ is
  to be specified later.
  Consider a function $\vphi\in C_{c}^{2}(\rr)$ such that $$ \vphi(z) = \begin{cases} K - z^{2p}, &\text{ if }\  |z| \le \beta, \\  0, & \text{ if }\ |z| \ge \beta +1. \end{cases}$$ Moreover, we can pick $\vphi$ so that it is nonnegative, symmetric about $0$, and nonincreasing on $[0,\infty)$. For any $(t,x,i) \in [0,\infty) \times \rr^{n} \times \M, $ set $\Phi(t,x,i):= e^{\lambda t} \phi(x,i)$ with $\phi(x,i):= \vphi(x_{\ell} - 2R)$, where $x_{\ell}$ is the $\ell$th component of $x$ and  $\lambda >0$ is a constant to be determined.
Note that $\phi \in \mathcal D_{\op}$ and that  for all $x\in \lbar D$, we have \begin{equation}
\label{eq-lem-3.8-x-ell-2R}
  -3R = - R -2R\le  x_{\ell} -2R \le R -2 R = - R.
\end{equation} Hence $\abs{x_{\ell} - 2R}  \le 3R < \beta$.   Thus for all $(x,i) \in  D \times \M, $ $\phi(x,i) =   K- (x_{\ell} - 2R)^{2 p}$, which, in turn, implies that
\begin{equation}
\label{eq-lem3.8-op}
  \begin{aligned}\op \phi(x,i)  = &   -2 p (x_{\ell} -2R)^{  2p -1} b_{\ell } (x,i)  - \frac{1}{2} 2 p (2p-1) (x_{\ell}- 2R)^{ 2 p-2} a_{\ell\ell}(x,i) \\ & + \int_{\rr_{0}^{n}}   \left[ \phi(x+ \gamma(x,i,z), i )  - \phi(x,i) - D_{x} \phi(x,i) \cdot \gamma(x,i,z) \right] \nu(dz).  \end{aligned}
\end{equation}We claim that for all $(x,i) \in D \times \M$, \begin{equation}
\label{eq-claim-lem3.8} \begin{aligned}
\int_{\rr_{0}^{n}} &   \left[\phi(x+ \gamma(x,i,z), i )  - \phi(x,i)  - D_{x} \phi(x,i)  \cdot \gamma(x,i,z) \right] \nu(dz) \le   2 p |x_{\ell} -2R|^{2p-1}   \frac{\kappa (1+ R^{2})}{\beta -3 R}.
\end{aligned}\end{equation}
To see this, we consider two cases. If $|x_{\ell} + \gamma_{\ell}(x,i,z) -2 R| \le \beta$, then $\phi(x+ \gamma(x,i,z),i) = \vphi(x_{\ell} + \gamma_{\ell}(x,i,z) - 2R) =  K- (x_{\ell} + \gamma_{\ell}(x,i,z)-2R)^{2p}$. Using
a Taylor expansion, we have
$$ \begin{aligned}
 \phi & (x+ \gamma(x,i,z), i )  - \phi(x,i)  - D_{x} \phi(x,i)  \cdot \gamma(x,i,z)  \\
  &  =   \vphi (x_{\ell} -2R+ \gamma_{\ell}(x,i,z)) - \vphi (x_{\ell}-2R)   -
   \vphi_{x_\ell} (x_{\ell} - 2R ) \gamma_{\ell}(x,i,z) \\
    &  = \frac{1}{2}
    \vphi_{x_\ell x_\ell} ( x_{\ell} - 2R + \theta \gamma_{\ell }(x,i,z)) (\gamma_{\ell}(x,i,z))^{2},
 \end{aligned}$$ for some  $\theta \in [0,1]$, where
  $\vphi_{x_\ell} =(d/ dx_\ell) \vphi$ and $\vphi_{x_\ell x_\ell}= (d^2/d x_\ell^2)\vphi$
 are the first-order and second-order derivatives of $\vphi$, respectively.  Since  $|x_{\ell} - 2R| \le \beta$ and $|x_{\ell} + \gamma_{\ell}(x,i,z) -2 R| \le \beta$, we must have   $| x_{\ell} - 2R + \theta \gamma_{\ell }(x,i,z)| \le \beta$. Thus
   $$\begin{aligned}
    \vphi_{x_\ell x_\ell} ( x_{\ell} - 2R + \theta \gamma_{\ell }(x,i,z))  =  -2p (2p-1)( x_{\ell} - 2R + \theta \gamma_{\ell }(x,i,z))^{2p-2}   \le 0.
\end{aligned}$$ Therefore we arrive at\begin{equation}
\label{eq1-claim-lem3.8}
   \phi   (x+ \gamma(x,i,z), i )  - \phi(x,i)  - D_{x} \phi(x,i)  \cdot \gamma(x,i,z)  \le 0 \text{ if }|x_{\ell} + \gamma_{\ell}(x,i,z) -2 R| \le \beta.
\end{equation}

On the other hand, if $|x_{\ell} + \gamma_{\ell}(x,i,z) -2 R| > \beta$,
 by the monotonicity of $\vphi$ on $[0, \infty)$, we have $$\phi(x+ \gamma(x,i,z),i) = \vphi(x_{\ell} + \gamma_{\ell}(x,i,z) - 2R)  = \vphi(\abs{x_{\ell} + \gamma_{\ell}(x,i,z) - 2R})  \le \vphi(\beta).$$
Note that for all $(x,i) \in D \times \M$,  since $|x_{\ell} -2 R| \le 3 R < \beta $, $$\phi (x,i) = \vphi(x_{\ell} - 2R)= \vphi(|x_{\ell}- 2R|)  \ge \vphi(\beta).$$ Then it follows that
\begin{equation}
\label{eq2-claim-lem3.8}
\begin{aligned}& \int_{\set{| x_{\ell}+ \gamma_{\ell} (x,i,z) -2R| > \beta}}    \left[\phi(x+ \gamma(x,i,z), i )  - \phi(x,i)  - D_{x} \phi(x,i)  \cdot \gamma(x,i,z) \right] \nu(dz)\\
& \ \ \le - \int_{\set{| x_{\ell}+ \gamma_{\ell} (x,i,z) -2R| > \beta}} D_{x} \phi(x,i)  \cdot \gamma(x,i,z) \nu (dz) \\
& \ \ =  \int_{\set{| x_{\ell}+ \gamma_{\ell} (x,i,z) -2R| > \beta}}  2p (x_{\ell} -2R)^{2p-1} \gamma_{\ell} (x,i,z) \nu(dz).\end{aligned}
\end{equation} Furthermore, a simple contradiction argument reveals that for all $(x,i) \in D \times \M$,  $$\set{z \in \rr_{0}^{n}: | x_{\ell}+ \gamma_{\ell} (x,i,z) -2R| > \beta}  \subset \set{z \in \rr_{0}^{n}: |\gamma_{\ell}(x,i,z)| > \beta - 3 R}.$$ Then it follows from \eqref{ito-condition} that
$$  \begin{aligned} &  \int_{\set{| x_{\ell}+ \gamma_{\ell} (x,i,z) -2R| > \beta}}  2p (x_{\ell} -2R)^{2p-1} \gamma_{\ell} (x,i,z) \nu(dz) \\ & \ \  \le \int_{\set{| x_{\ell}+ \gamma_{\ell} (x,i,z) -2R| > \beta}} 2 p |x_{\ell} -2R|^{2p-1} |\gamma_{\ell} (x,i,z)|  \nu(dz) \\
& \ \ \le  2 p |x_{\ell} -2R|^{2p-1} \int_{\set{ |\gamma_{\ell}(x,i,z)| > \beta - 3 R}}  |\gamma_{\ell} (x,i,z)|  \nu(dz)  \\
& \ \ \le 2 p |x_{\ell} -2R|^{2p-1} \int_{\set{ |\gamma_{\ell}(x,i,z)| > \beta - 3 R}}  \frac{|\gamma_{\ell}(x,i,z)|^{2}}{ \beta -3R} \nu (dz) \\
& \ \ \le  2 p |x_{\ell} -2R|^{2p-1}   \frac{\kappa (1+ |x|^{2})}{\beta -3 R}  \le 2 p |x_{\ell} -2R|^{2p-1}   \frac{\kappa (1+ R^{2})}{\beta -3 R}. \end{aligned} $$
This, together with \eqref{eq1-claim-lem3.8} and \eqref{eq2-claim-lem3.8}, implies \eqref{eq-claim-lem3.8}.
Putting  \eqref{eq-claim-lem3.8} into \eqref{eq-lem3.8-op},  and using \eqref{eq-lem3.8-cond-a>0} and \eqref{eq-lem-3.8-x-ell-2R}, we deduce
\begin{equation}
\label{eq-lem3.8-op-phi}
  \begin{aligned}\op \phi(x,i) &  \le - 2 p (x_{\ell} -2R)^{2p-2} \! \left[ (x_{\ell}- 2R) b_{\ell}(x,i) + \frac{1}{2} a_{\ell \ell}(x,i) (2p-1) + (x_{\ell} -2R) \frac{\kappa (1+ R^{2})}{\beta - 3 R}\!\right] \\
 & \le  -2 p R^{2p-2}  \left[\frac{1}{2} \tilde a (2p-1)  + \min_{x\in \lbar D} \set{ (x_{\ell}- 2R) b_{\ell}(x,i) + (x_{\ell} -2R) \frac{\kappa (1+ R^{2})}{\beta - 3 R}}\right] \\
 & < -\rho < 0, \end{aligned}
\end{equation} by selecting $p > 1$ sufficiently large, where $\rho = \rho(p, R, \beta, \lbar D) > 0 $ is a constant.

 Now we apply the generalized It\^o formula to the process $\Phi(X(t),\al(t)) = e^{\lambda t} \phi(X(t),\alpha(t))$ to get
 \begin{displaymath}
\begin{aligned}
 e^{\lambda ( t\wedge \tau)} \phi(X(t\wedge \tau),\al(t\wedge \tau))    & = \phi(x,i) + \int_{0}^{t\wedge \tau} \!\!e^{\lambda s} [\lambda \phi(X(s-),\alpha(s-)) + \op \phi(X(s-),\al(s-)]ds    \\
    &   \ \quad + M_{1}^{\Phi}(t\wedge \tau) + M_{2}^{\Phi}(t\wedge \tau) + M_{3}^{\Phi}(t\wedge \tau),
\end{aligned}
\end{displaymath} where $M_{k}^{\Phi}(t), k=1,2,3$ are  defined as
$$ \begin{aligned} & M_1^{\Phi}(t)  = \int^{t}_0  e^{\lambda s}D_x
\phi (X(s-),\al(s-)) \cdot \sigma(X(s-),\al(s-))
d W(s), \\
 & M_2^{\Phi}(t)  =  \int^{t}_0\!\!\int_{\rr} e^{\lambda s} \big[ \phi( X(s-), \al(s-)+
h(X(s-),\al(s-),z))
-\phi (X(s-),\al(s-))\big] \tilde N_{1}(ds, dz), \\
&  M_3^{\Phi}(t) = \int^{t}_0 \!\! \int_{\rr^n_0} e^{\lambda  s} [ \phi(X(s-) + \gamma(X(s-),\al(s-),z),\al(s-))
- \phi(X(s-),\al(s-))]\tilde N(ds, dz).
\end{aligned}$$
Using the definition of $\phi\cd$, we have $\ex[M_k^\Phi(t \wedge \tau)]  =0$ for $k=1,2,3$.
Moreover, for all $s \ge 0$, $\phi(X(s-),\al(s-))  \le K < \infty$, with $K$ being the constant in the definition of the function $\vphi\cd$. Note that for  $\lambda > 0$ sufficiently small, we have $\lambda K - \rho  < 0$. Thus  it follows from \eqref{eq-lem3.8-op-phi} that
\begin{displaymath}\begin{aligned}
\ex& [e^{\lambda ( t\wedge \tau)} \phi(X(t\wedge \tau),\al(t\wedge \tau)) ] \\ & =  \phi(x,i) +  \ex\left[ \int_{0}^{t\wedge \tau} \!\!e^{\lambda s} [\lambda \phi(X(s-),\alpha(s-)) + \op \phi(X(s-),\al(s-)]ds\right] \\
& \le  \phi(x,i) +  \ex\left[ \int_{0}^{t\wedge \tau} \!\! e^{\lambda s} [\lambda K -  \rho ] ds \right] \\
& = \phi(x,i) - (\rho - \lambda K) \frac{\ex[e^{\lambda (t \wedge \tau)}] - 1}{ \lambda }.
\end{aligned}\end{displaymath}
Now since the function $\phi\cd$ is nonnegative and bounded above by $K$, we have
\begin{displaymath}\begin{aligned}
\ex[e^{\lambda (t \wedge \tau)}]  & \le \frac{\lambda \phi(x,i)}{\rho -\lambda K} + 1 - \frac{\lambda}{\rho -\lambda K}\ex[e^{\lambda ( t\wedge \tau)} \phi(X(t\wedge \tau),\al(t\wedge \tau))] \\
& \le \frac{\lambda K }{\rho -\lambda K} + 1.
\end{aligned}\end{displaymath}
Passing to the limit as $t \to \infty$ yields that $\ex[ e^{\lambda \tau}] < \infty$.
Thus, we conclude that $\tau< \infty$ a.s. and $\ex[\tau] < \infty$.
 \end{proof}

To proceed, we  further  impose the following condition.

\begin{itemize}
\item[(A2)] Assume
 $\partial D \in C^2$ and that for each $i\in \M$,
 \begin{displaymath}
\begin{cases}
         c(x,i)\ge  0 \text{ and } c(\cdot,i) \text{ is uniformly H\"older continuous
in }\lbar D, \\
         \xi(\cdot,i)  \text{  is uniformly continuous in } \lbar D \text{ and
 } \eta(\cdot,i)
\text{ is continuous and bounded on }   D^c.
\end{cases}
\end{displaymath}
\end{itemize}

\begin{thm}\label{thm-fk-bdd-region}  Assume 
{\rm (A1)}, {\em (A2)}, \eqref{eq-lem3.8-cond-a>0}, and
\begin{equation}
\label{eq-thm3.9-new-condtion}
\sup\set{\abs{\gamma(x,i,z)}: (x,i,z) \in \lbar D \times \M \times \rr_{0}^{n}} \le K < \infty.
\end{equation}
 Then the solution of the
system of boundary value
problem \eqref{bvp}  is given by
\begin{equation}\label{fk}\barray
 u(x,i) \ad = \ex_{x,i} \left[\eta (X(\tau), \al(\tau))
\exp \(- \int^\tau_0 c(X   (s),\al    (s)) ds \) \right] \\
\aad \quad - \ex_{x,i}
\left[ \int^\tau_0 \xi(X   ( t-),\al   (t-)) \exp \(-
\int^t_0 c(X   (s),\al    (s)) ds \) dt \right] .\earray\end{equation}
\end{thm}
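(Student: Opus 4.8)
The plan is to mimic the argument of \thmref{thm-feyman1}: build from the candidate solution $u$ a discounted process that is a local martingale up to the first exit time $\tau$ from $D$, apply a localization together with the optional sampling theorem, and let the horizon tend to infinity. Set $R:=\sup_{x\in D}|x|<\infty$ and let $K$ be the constant in \eqref{eq-thm3.9-new-condtion}. Since $D$ is bounded, $\partial D\in C^{2}$, and \eqref{eq-lem3.8-cond-a>0} holds, \lemref{lem-tau-finite} gives $\tau<\infty$ a.s. and $\ex_{x,i}[\tau]<\infty$. Because $X(t)\in D$ for all $t<\tau$, we have $X(t-)\in\lbar D$ for every $t\le\tau$, and \eqref{eq-thm3.9-new-condtion} forces $|X(\tau)|\le R+K$. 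Recall also that a classical solution $u$ of \eqref{bvp} (in the sense of \remref{rem-about-classical}) satisfies $u(\cdot,i)\in C^{2}$, $u(\cdot,\cdot)\in\mathcal D_{\op}$, $u(\cdot,i)=\eta(\cdot,i)$ on $D^{c}$, and the first line of \eqref{bvp} on $D$; combined with continuity of $u(\cdot,i)$ on the compact set $\lbar D$ and boundedness of $\eta(\cdot,i)$ on $D^{c}$ from (A2), $u$ is bounded on $\rr^{n}\times\M$.

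\textbf{The discounted process.} Put $\Gamma(s):=\exp\bigl(-\int_{0}^{s}c(X(r),\al(r))\,dr\bigr)$, so $0\le\Gamma\le1$ since $c\ge0$, and define
\[
 N(s):=\Gamma(s)\,u(X(s),\al(s))-\int_{0}^{s}\Gamma(r)\,\xi(X(r-),\al(r-))\,dr,\qquad s\ge0.
\]
Applying the generalized It\^o formula \eqref{eq:ito} to $u(X(\cdot),\al(\cdot))$ (legitimate because $u\in\mathcal D_{\op}$) together with the integration-by-parts rule for the continuous finite-variation factor $\Gamma$, one gets, for every $s\ge0$,
\[
 N(s\wedge\tau)=u(x,i)+\int_{0}^{s\wedge\tau}\Gamma(r)\bigl[\op u-c\,u-\xi\bigr](X(r-),\al(r-))\,dr+\wdt{M}_{1}(s\wedge\tau)+\wdt{M}_{2}(s\wedge\tau)+\wdt{M}_{3}(s\wedge\tau),
\]
where $\wdt{M}_{j}(\cdot):=\int_{0}^{\cdot}\Gamma(r)\,dM_{j}^{u}(r)$ and $M_{1}^{u},M_{2}^{u},M_{3}^{u}$ are the terms of \eqref{Mi-def} with $f=u$. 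For $r<\tau$ one has $X(r-)\in\lbar D$; since $\op u-cu=\xi$ on $D$, extended to $\lbar D$ by continuity of $\op u-cu$ and the uniform continuity of $\xi$ on $\lbar D$ from (A2), the $dr$-integral vanishes, so $\{N(s\wedge\tau)\}_{s\ge0}$ is a local martingale.

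\textbf{Passing to the limit.} Let $\{\rho_{k}\}$ be a nondecreasing localizing sequence with $\rho_{k}\uparrow\infty$ reducing $N(\cdot\wedge\tau)$, so that $\ex_{x,i}[N(s\wedge\tau\wedge\rho_{k})]=u(x,i)$ for all $s$ and $k$. On $\{t\le s\wedge\tau\}$ we have $X(t-)\in\lbar D$, whence $|N(t)|\le\sup_{\rr^{n}\times\M}|u|+\tau\sup_{\lbar D\times\M}|\xi|=:Z$, and $\ex_{x,i}[Z]<\infty$ because $\ex_{x,i}[\tau]<\infty$ and $\xi$ is bounded on the compact set $\lbar D$. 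Dominated convergence (let $k\to\infty$, then $s\to\infty$, using $\tau<\infty$ a.s.) yields $\ex_{x,i}[N(\tau)]=u(x,i)$. Since $X(\tau)\in D^{c}$ gives $u(X(\tau),\al(\tau))=\eta(X(\tau),\al(\tau))$,
\[
 N(\tau)=\Gamma(\tau)\,\eta(X(\tau),\al(\tau))-\int_{0}^{\tau}\Gamma(r)\,\xi(X(r-),\al(r-))\,dr,
\]
and taking $\ex_{x,i}$ reproduces exactly \eqref{fk}.

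\textbf{Main obstacle.} The delicate point is the rigorous justification of the It\^o expansion and of the local-martingale claim, i.e. the control of the jump term $M_{3}^{u}$, whose integrand forces one to evaluate $u$ across $\partial D$, where $u$ is patched with $\eta$ and only continuity is available. This is precisely where the boundedness of $\gamma$ on $\lbar D\times\M\times\rr^{n}_{0}$ in \eqref{eq-thm3.9-new-condtion}, the boundedness of $u$ and $\eta$, and the estimates in (A1) and (A2) are used: to guarantee that $u\in\mathcal D_{\op}$ along the stopped trajectory and that $\wdt{M}_{3}(\cdot\wedge\tau)$ has the requisite integrability (so that, in particular, $N(\cdot\wedge\tau)$ really is a local martingale and the overshoot $X(\tau)$ stays in a bounded region). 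The relevant computations are of the same kind as, but more involved than, the treatment of $M_{3}^{V}$ in the proof of \propref{prop-reg-0}.
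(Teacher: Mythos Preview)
Your proof is correct and follows essentially the same route as the paper: apply the generalized It\^o formula to the discounted process $\Gamma(s)u(X(s),\al(s))$, use the PDE to reduce the drift to the $\xi$-integral, control the stopped martingale pieces via boundedness of $u$, boundedness of $\gamma$ on $\lbar D$ from \eqref{eq-thm3.9-new-condtion}, and $\ex_{x,i}[\tau]<\infty$ from \lemref{lem-tau-finite}, and then pass to the limit by dominated convergence. The only cosmetic difference is that the paper takes expectations directly at level $t\wedge\tau$ after checking $\ex[M_j(t\wedge\tau)]=0$ for each piece, whereas you route through a generic localizing sequence before dominating; both arrive at \eqref{fk}.
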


\begin{proof}
We consider the process $M_{t}:= \exp\{-\int_{0}^{t}c(X(s),\al(s))ds\} u(X(t),\al(t))$.  Since $u \in
{\mathcal D}_{\op}$
for any $t\ge 0$, we can apply the generalized It\^o formula \eqref{eq:ito} to obtain
\begin{displaymath}
\begin{aligned}
   \exp& \set{-\int_{0}^{t\wedge \tau}c(X(s),\al(s))ds} u(X(t\wedge \tau),\al(t\wedge \tau)) - u(x,i) \\
    & = \int_{0}^{t\wedge \tau}\!\! e^{-\int_{0}^{s} c(X(r),\al(r))dr} \left[ -c(X(s-),\al(s-)) u(X(s-),\al(s-))  + \op u(X(s-),\al(s-))\right] ds  \\
    &  \ \ + M_{1}(t\wedge \tau) + M_{2}(t\wedge \tau) +M_{3}(t\wedge \tau),
\end{aligned}
\end{displaymath} where
\begin{displaymath}
\begin{aligned}
 M_{1}(t)    & = \int_{0}^{t}    e^{-\int_{0}^{s} c(X(r),\al(r))dr}  D_{x} u(X(s-),\al(s-)) \cdot \sigma(X(s-),\al(s-)) dW(s),\\
  M_{2}(t)   &  = \int_{0}^{t} \int_{\rr}  e^{-\int_{0}^{s} c(X(r),\al(r))dr}  \\ & \hspace{.8in} \times  [ u(X(s-), \alpha(s-)  + h(X(s-),\al(s-),z)) - u(X(s-),\al(s-))]  \tilde N_{1}(ds, dz),
  \\M_{3}(t)  & =    \int_{0}^{t} \int_{\rr_{0}^{n}}  e^{-\int_{0}^{s} c(X(r),\al(r))dr}  \\ & \hspace{.75in} \times  [ u(X(s-)+\gamma(X(s-),\al(s-),z), \alpha(s-)    ) - u(X(s-),\al(s-))]  \tilde N (ds, dz).
\end{aligned}
\end{displaymath} Since $D$ is bounded, we have $\ex[M_{1}(t\wedge \tau)] = \ex[M_{2}(t\wedge \tau)] =0$ and, thanks to \eqref{eq-thm3.9-new-condtion}, we can show $\ex[M_{3}(t\wedge \tau)^{2}] < \infty$ and hence $\ex[M_{3}(t\wedge \tau)] =0$.

Now it follows from \eqref{bvp} that
\bea \ad \ex_{x,i} \left[u(X   (t\wedge \tau),\al
(t\wedge \tau)) \exp \( -\int^{t\wedge \tau}_0
 c(X   (s),\al(s))  ds\) \right] - u(x,i) \\
\aad \
= \ex_{x,i} \left[ \int^{t\wedge \tau}_0 \exp \( -\int^s_0
 c(X   (r),\al(r))  dr\)
 \left[\op  -c(X(s-),\al(s-)) \right] u(X   (s-),\al    (s-))ds \right]  \\
\aad \  = \ex_{x,i} \left[  \int^{t\wedge \tau}_0\exp \(- \int^s_0
c(X   (r),\al(r))  dr\)
\xi(X    (s-),\al   (s-))   ds \right].
\eea
Since $\xi\cd $ is bounded and $c\cd \ge 0$,  Lemma \ref{lem-3.8} and the bounded convergence theorem lead to
$$  \begin{aligned}\ex_{x,i} & \left[  \int^{t\wedge \tau}_0\exp \(- \int^s_0
c(X   (r),\al(r))  dr\)
\xi(X    (s-),\al   (s-))   ds \right] \\ &  \to  \ex_{x,i} \left[  \int^{ \tau}_0\exp \(- \int^s_0
c(X   (r),\al(r))  dr\)
\xi(X    (s-),\al   (s-))   ds \right] \end{aligned}$$ as $t\to \infty$.
On the other hand, using the continuity of the function $u$ and boundedness of the function $\eta\cd$, we have
$$ \begin{aligned}
   \abs{u(X   (t\wedge \tau),\al
(t\wedge \tau))  e^{  -\int^{t\wedge \tau}_0
 c(X   (s),\al(s))  ds} }
  & 
   \le \abs{ u(X   (t\wedge \tau),\al
(t\wedge \tau))} \\
&  = \abs{u(X(t),\al(t))I_{\set{t<  \tau}}} +   \abs{u(X(\tau),\al(\tau))I_{\set{t \ge  \tau}}}  \\
&  \le K_{1} +   \abs{ \eta(X(\tau),\al(\tau))I_{\set{t \ge  \tau}}} \le K_{1 } + K_{2} < \infty,
\end{aligned}$$ where $K_{1}:= \max\{|u(x,i)|: x\in \lbar D, i \in \M\}$ and $ K_{2}:=\sup\{|\eta(x,i)|: x\in  D^{c}, i \in \M\}$.
Thus we can again apply the bounded convergence theorem and Lemma \ref{lem-3.8} to derive
$$ \begin{aligned}& \ex_{x,i} \left[u(X   (t\wedge \tau),\al
(t\wedge \tau)) \exp \( -\int^{t\wedge \tau}_0
 c(X   (s),\al(s))  ds\) \right] \\ & \ \ \to \ex_{x,i} \left[u(X   ( \tau),\al
( \tau)) \exp \( -\int^{ \tau}_0
 c(X   (s),\al(s))  ds\) \right] \ \text{ as } \ t\to \infty. \end{aligned}$$
Putting the above equations together leads to \eqref{fk}.
 \end{proof}

\section{Arcsine Laws}\label{sec:arc}

Paul L\'evy's celebrated arcsine law \cite{levy1940} states that for all $a\in (0,1)$, $$\pr\set{\int_{0}^{t} I_{\set{W(s) > 0}}ds \le a } = \frac{2}{\pi}\arcsin (\sqrt a). $$ The arcsine law plays a crucial role in the theory of fluctuations in
random walks.
In fact, it was shown in \cite{erdos1947} that for any  random walk $\{S_{n}\}_{n=1}^{\infty}$ with
zero mean and unit variance, $$ \lim_{n\to \infty} \pr\set{\frac{1}{n} \sum_{i=1}^{n}I_{\set{S_{i} >0}} \le a }= \frac{2}{\pi}\arcsin (\sqrt a).$$
Later, the arcsine laws were generalized in \cite{Khasm99,lamperti1958, watanabe1995}, to name just a few.

We aim to derive an arcsine law for regime-switching jump diffusion processes considered in this paper. To be precise, we consider a one-dimensional  regime-switching jump diffusion process, in which for simplicity, we assume that the drift term $b$ is identically zero. Moreover, motivated by applications in mathematical finance \cite{basu2009asymptotic}, risk modeling \cite{yin2006bounds}, queueing networks \cite{YinZ13},
etc., we suppose that the switching process is singularly perturbed with fast switching (see
 Section \ref{sect-weak_limit} for the precise formulation).  We show that as $\e \to 0$, the regime-switching jump diffusion process converges weakly and that  the limiting process has the arcsine law. To this purpose, we first show in  Section \ref{sect-weak_limit}  that under certain assumptions, the regime-switching jump diffusion process with fast switching converges weakly to a diffusion process. This result, together with the arcsine law for null recurrent diffusion established in \cite{Khasm99}, helps us to derive the desired arcsine law for regime-switching jump diffusion processes in Proposition \ref{lim-d-T}, which is in Section \ref{sect-arcsine-law}. Finally we remark in Section \ref{sect-L2-convergence} that  in general  there is no $L_{2}$ convergence associated with the weak convergence result established in Section   \ref{sect-weak_limit}.

\subsection{Weak Limit of Switching Jump Diffusions with Fast Switching}\label{sect-weak_limit}
For one-dimensional diffusions, it is well-known (see, e.g.,  \cite[Section 5.5]{Karatzas-S}) that through a proper transformation, one can remove  the drift term.
For simplicity,
we consider a one-dimensional regime-switching jump diffusion without drift   of the following form
\begin{equation}\label{sw-a} \barray
\ad dX^\e(t) = \sigma(X^\e(t),\al^\e(t))dW(t)  + \e \int_{\rr_0} \gamma(X^\e(t-),\al^\e(t-),y)\tilde N(dt,dy),
 \ t \ge   0, \\
 \ad  X^\e(0)=x_0, \ \al^\e(0)=i_0 \in \M ,
  \earray \end{equation}
where $\rr_0:= \rr-\set{0}$ (consistent with the definition $\rr^n_0$  with $n=1$) and
$\e>0$ is a small parameter.
We assume $x_0$  to be independent of
$\e$  and non-random for simplicity.
Throughout this section, we   assume that
the switching process is a continuous-time Markov chain $\al^\e(t)$ taking values
in $\M= \set{1, \dots, m}$. Moreover,  the Markov chain is
time-homogeneous with a generator $Q^\e= Q/\e$ such that $Q$ is weakly irreducible
\cite[p. 23]{YinZ13}. Denote the associated
quasi-stationary distribution by $\nu=(\nu_1,\dots,\nu_m) \in \rr^{1\times m}$.
As in Section \ref{sect-formulation}, we assume that  the Markov chain $\al^{\e} \cd$, the Brownian motion $W\cd$, and the Poisson random measure
$N\cd$ are independent.

Because of the assumption that $Q^{\e} = Q/\e$,  $\al^{e}\cd$ is the so-called singularly perturbed process with fast switching.
 The motivation for considering such  systems  stems from a wide range of applications in
flexible manufacturing systems, production planning, queueing networks, mathematical finance, risk modeling, etc.
The main goal is to obtain limiting systems with    reduced   complexity that still have good approximation properties to the original systems.
For example, in \cite{YinZ13},  starting with the motivation of a production planning problem in Section 1.1, Sections 3.3-3.5
present a number of examples of queues with finite capacity,
system reliability, competing risk  models, stochastic optimization problems, and linear quadratic control problems among others. It was demonstrated
 that a two-time-scale approach using $Q^{\e} = Q/\e$  leads to simpler limiting systems with good approximating properties.
Additionally,
using such a two-time-scale approach,
as in \cite{MW1998}, one can consider time-inhomogeneous Markovian models with {\em slowly varying} rates as well.
The reader is referred to the aforementioned references for further reading.

We proceed to obtain the  weak convergence theorem of this section. Let $X^{\e}\cd$ be the solution to \eqref{sw-a}. Then Theorem \ref{ep-lim}
indicates that as $\e\to 0$, $X^\e\cd$ converges weakly to $X\cd$, where $X\cd$ is a solution to the
following stochastic differential equation
\begin{equation}\label{lim-sw}  \begin{cases} dX(t) = \lbar \sigma (X(t)) dW(t), & \ t\ge 0,\\  X(0) =x_{0}, &  \end{cases}\end{equation}
with
\beq{sg-bar-def}  \lbar\sigma(x) =\sqrt{ \sum^m_{i=1} \sigma ^2(x, i) \nu_i}.\eeq

 \begin{rem}  Define $$ I(\lbar \sigma) : =\set{x\in \rr: \int_{-\delta}^{\delta} \frac{dy}{\lbar\sigma^{2}(x+y)}= + \infty, \forall \delta >0}, \quad Z(\lbar \sigma):=\set{x\in\rr: \lbar\sigma(x) =0}.$$
By virtue of  \cite[Theorems 5.5.4 and 5.5.7]{Karatzas-S},
\begin{itemize}
  \item the SDE \eqref{lim-sw} has a non-exploding weak solution for every initial distribution $\mu$ if and only if $I(\lbar \sigma) \subset Z(\lbar\sigma)$,
  \item  the SDE \eqref{lim-sw} has  a solution that is unique in the sense of probability law if and only if $I(\lbar \sigma) = Z(\lbar \sigma)$.
\end{itemize}
Suppose that $\sigma(\cdot, i)$ is continuous and $\nu_{i} > 0$ for each $i \in \M$. Then it is straightforward  to show that $I(\lbar \sigma) \subset Z(\lbar\sigma)$ and hence  \eqref{lim-sw} has a non-exploding weak solution. Suppose also that $Z(\lbar \sigma) = \emptyset$
(i.e., the diffusion is non-degenerate).
Then the inclusion $Z(\lbar \sigma)  \subset I(\lbar \sigma)$ is trivially satisfied and thus  \eqref{lim-sw}  has a unique weak solution.
\end{rem}

\begin{thm}\label{ep-lim}
Assume  condition
{\rm(A1)} and
suppose that $Q$ is weakly
irreducible and equation \eqref{lim-sw} has a unique $($in the sense of probability law$)$ solution for each
initial condition. Then for $0<T<\infty$, $\{X^\e\cd\}$ is tight in $D([0,T]: \rr)$ and any weakly
convergent subsequence has limit $X\cd$, which is the solution of \eqref{lim-sw}.
\end{thm}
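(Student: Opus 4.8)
The plan is to prove tightness of $\{X^\e\cd\}$ in $D([0,T]:\rr)$ and then identify the weak limit as the unique-in-law solution of \eqref{lim-sw} via the martingale problem. First I would establish the needed moment and oscillation bounds. Using Lemma \ref{lem-bdd-moments} (applied with the obvious modification to the $\e$-dependent system, noting that the coefficient of $\tilde N$ is $\e\gamma$ with $\e \le 1$, so the bounds are uniform in $\e$), I obtain $\sup_\e \ex\,[\sup_{0\le t \le T}|X^\e(t)|^2] \le K < \infty$. Next, for the tightness criterion in $D([0,T]:\rr)$ I would use the Aldous-type / Kurtz criterion: for stopping times $\tau \le T$ and small $h>0$, estimate $\ex\,[|X^\e(\tau+h) - X^\e(\tau)|^2 \mid \F_\tau]$. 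Splitting $X^\e(\tau+h)-X^\e(\tau)$ into the stochastic-integral term $\int_\tau^{\tau+h}\sigma(X^\e(s),\al^\e(s))dW(s)$ and the jump term $\e\int_\tau^{\tau+h}\int_{\rr_0}\gamma(X^\e(s-),\al^\e(s-),y)\tilde N(ds,dy)$, the It\^o isometry together with the linear-growth bounds in (A1) gives a bound of the form $K(1+\ex\,[\sup_{0\le s \le T}|X^\e(s)|^2])\,h \le K'h$, uniformly in $\e$ and in the stopping time. This yields tightness of $\{X^\e\cd\}$.

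The heart of the argument is the characterization of any weak limit. By Prokhorov's theorem, tightness gives a weakly convergent subsequence (still indexed by $\e$) with limit $X\cd$; by the Skorohod representation theorem I may assume convergence a.s. in $D([0,T]:\rr)$ on a common probability space. The goal is to show that for every $f \in C_c^2(\rr)$ the process
\[
f(X(t)) - f(X(0)) - \int_0^t \frac{1}{2}\lbar\sigma^2(X(s))\,f''(X(s))\,ds
\]
is a martingale with respect to the natural filtration; uniqueness in law for \eqref{lim-sw} (assumed) then forces $X\cd$ to be the solution of \eqref{lim-sw}, and since every subsequential limit coincides, the whole sequence converges. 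To obtain the martingale property, I would start from the generalized It\^o formula \eqref{eq:ito} applied to $f(X^\e\cd)$ — with the switching generator being $Q^\e = Q/\e$ — write, for $s < t$ and any bounded continuous $\F_s$-measurable functional $g$ of the path up to $s$,
\[
\ex\Bigl[g\cdot\Bigl(f(X^\e(t)) - f(X^\e(s)) - \int_s^t \bigl[\tfrac12\sigma^2(X^\e(r),\al^\e(r))f''(X^\e(r)) + \text{(jump term)}\bigr]dr\Bigr)\Bigr] = 0,
\]
the martingale terms $M_1^f, M_2^f, M_3^f$ having zero expectation (using the compact support of $f$ and the moment bounds). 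Note the switching generator term $\sum_j q_{ij}^\e(x)[f(x)-f(x)] = 0$ disappears because $f$ depends only on $x$, not on the discrete component — this is exactly why one works with $f(X^\e\cd)$ and not $f(X^\e\cd, \al^\e\cd)$.

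The main obstacle is handling the jump term $\int_{\rr_0}[f(x+\e\gamma(x,i,y)) - f(x) - \e D_xf(x)\cdot\gamma(x,i,y)]\nu(dy)$ in the limit $\e \to 0$: by a Taylor expansion this is $O(\e^2)$ pointwise (bounded by $\tfrac12\|f''\|_\infty \e^2\int_{|y|\le1}|\gamma|^2\nu(dy)$ plus an $O(\e)$ contribution from $|y|>1$ using linear growth and $\nu(\{|y|>1\})<\infty$), so it vanishes uniformly on compact $x$-ranges; combined with the moment bounds this term drops out. The remaining — and genuinely delicate — step is averaging the diffusion term: I must show
\[
\ex\int_s^t \tfrac12\sigma^2(X^\e(r),\al^\e(r))f''(X^\e(r))\,dr \;\longrightarrow\; \ex\int_s^t \tfrac12\lbar\sigma^2(X(r))f''(X(r))\,dr.
\]
Here $X^\e(r) \to X(r)$ a.s. handles the continuous-in-$x$ factors, but $\sigma^2(\cdot,\al^\e(r))$ does not converge — rather, the fast switching forces a homogenization: $\al^\e\cd$ spends asymptotically a fraction $\nu_i$ of time in state $i$. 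The standard tool (as in \cite[Ch.~7]{YinZ13}) is to introduce the occupation measures or to use the perturbed test function $f(X^\e\cd) + \e f_1(X^\e\cd,\al^\e\cd)$, where $f_1(x,\cdot)$ solves the Poisson equation $Q(x)f_1(x,\cdot)(i) = \tfrac12[\lbar\sigma^2(x) - \sigma^2(x,i)]f''(x)$ (solvable by weak irreducibility, with $\sum_i \nu_i$ times the right side equal to zero); then $\e\,\op^\e f_1$ exactly cancels the discrepancy $\tfrac12[\sigma^2(x,i) - \lbar\sigma^2(x)]f''(x)$ in the generator, up to terms that are $O(\e)$ and vanish by the moment bounds. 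Carrying this perturbed-test-function computation through — verifying that $f_1$ has the needed regularity (it does, being a finite linear combination of $f''$ times bounded functions of $x$) and that all error terms are $L^1$-bounded uniformly in $\e$ — is where the real work lies, but it follows the now-classical singular-perturbation template and presents no conceptual surprises in the presence of the (asymptotically negligible) L\'evy jumps.
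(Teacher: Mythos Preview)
Your proposal is correct; the overall architecture (moment bounds $\Rightarrow$ Kurtz/Aldous tightness, then Skorohod representation and identification of the limit via the martingale problem for $C_c^2$ test functions, with the $\e$-scaled jump generator vanishing by Taylor) matches the paper exactly. The one genuine methodological difference is in the averaging step for $\tfrac12\sigma^2(X^\e(r),\al^\e(r))f''(X^\e(r))$.

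You propose the perturbed test function/corrector method: solve the Poisson equation $Qf_1(x,\cdot)(i) = \tfrac12[\lbar\sigma^2(x)-\sigma^2(x,i)]f''(x)$ (solvable since the right side has zero $\nu$-mean), replace $f$ by $f+\e f_1$, and observe that the $\tfrac{1}{\e}Q$ part of $\op^\e$ acting on $\e f_1$ cancels the fluctuating term, leaving $\tfrac12\lbar\sigma^2 f''$ plus $O(\e)$ remainders. The paper instead does a direct time-discretization: partition $[t,t+s]$ into subintervals of length $\Delta^\e=\e^{(1-\Delta)/2}$, freeze $X^\e$ at the left endpoint of each subinterval, and invoke the explicit mixing estimate $\bigl|\ex^\e_{t+l\Delta^\e}\int_{t+l\Delta^\e}^{t+(l+1)\Delta^\e}[I_{\{\al^\e(u)=i\}}-\nu_i]\,du\bigr|=O(\sqrt{\e})$ from \cite{YinZ13} to replace $\sigma^2(\cdot,\al^\e(u))$ by $\sum_i\nu_i\sigma^2(\cdot,i)=\lbar\sigma^2$ up to an error $O(\e^{\Delta/2})$. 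Your route is cleaner and more systematic (no mesh parameter to tune, and it extends readily to $x$-dependent $Q$), at the cost of constructing and controlling the regularity of the corrector $f_1$; the paper's route is more hands-on and relies on the specific ergodic rate for the fast chain, but avoids any auxiliary function. Both are standard and both work here; one small note is that in this section $Q$ is a constant matrix (not $Q(x)$), which slightly simplifies your Poisson equation.
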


\begin{proof} The proof is divided into
several steps.

Step 1:
We first prove the tightness of $\{X^\e\cd\}$  in $D([0,T]:\rr)$.
Note that by virtue of \lemref{lem-bdd-moments},
$\ex \sup_{t\in [0,T]} |X^\e(t)|^2 <\infty$.
For any $\delta>0$  and any $0<t,s < T$ with $s \le \delta$, we have
\begin{equation}\label{x2}\barray X^\e(t+s)- X^\e(t) \ad=\int^{t+s}_t \sg (X^\e(u),\al^\e(u)) dW(u)\\
\aad \
+ \e\int^{t+s}_t \int_{\rr_0} \ga (X^\e(u-),\al^\e(u-), y )\wdt N (du, dy).\earray\end{equation}
Use $\ex^\e_t$ to denote the conditioning on the $\sg$-algebra generated by
${\cal F}^\e_t = \{ X^\e(u): u\le t\}$. Then by the H\"older inequality and the Lipschitz continuity of $\sg(\cdot,i)$ for each $i$,
we have
\begin{equation}\label{2-mom-es}\barray
\ex^\e_t | X^\e(t+s)- X^\e(t)|^2 \ad \le   K  \int^{t+s}_t  [1+ \ex^\e_t|X^\e(u)|^2] du +  K e^\e(t+s,t), \earray\end{equation}
where
\begin{equation}\label{2-mom-es-1} e^\e(t+s,t)= \e^2 \ex^\e_t  \Big| \int^{t+s}_t \int_{\rr_0} \ga (X^\e(u-),\al^\e(u-), y) \wdt N ( du, dy) \Big|^2.\end{equation}
Using \lemref{lem-bdd-moments},
\eqref{2-mom-es-1} implies that
$$ \limsup_{\e\to 0} \ex e^\e(t+s,t) =0$$
It then follows that there is a $\wdt \mu^\e(\delta)$ such that
$$\ex^\e_t | X^\e(t+s)- X^\e(t)|^2  \le\ex^\e_t \wdt \mu^\e (\delta) \ \hbox{ for all } \ 0\le s \le \delta, \ t\le T,$$
and that
$$\lim_{\delta\to 0}
\limsup_{\e\to 0} \ex \wdt \mu^\e(\delta) =0.$$
Therefore, by the Kurtz tightness criterion \cite[Theorem 3, p.47]{Kushner84},
$\{X^\e\cd\}$ is tight in $ D([0,T],\rr)$.

Step 2: Characterize the limiting process. Since $\{X^\e\cd\}$ is tight, by Prohorov's theorem, we can extract
a weakly convergent subsequence.   With a slight abuse of notation, let's still index such a subsequence by 
  $\{X^\e\cd\}$ with limit denoted by $X\cd$.
In view of the Skorohod representation theorem, we may assume 
without loss of generality that $X^\e\cd$ converges to
$X\cd$ a.s., and the convergence is uniform on any bounded interval.  We proceed to characterize the limiting process
using a martingale problem formulation.
We pick out
any $\rho\cd\in C^3_0$ (class of $C^3$ functions with compact support),
which implies that $D^2_x \rho\cd$ is Lipschitz. For any $t,s>0$,
\begin{equation}\label{mg}\barray
\disp \rho(X^\e(t)) -\ad  \int^t_0  \Big\{ {1\over 2}\sg^2(X^\e(u),\al^\e(u))D^2_x\rho(X^\e(u))  \\
\aad \qquad  +\e\int_{\rr_{0}}[\rho(X^\e(u)+ \gamma(X^\e(u),\al^\e(u),z),\al^\e(u))-\rho(X^\e(u),
\al^\e(u))\\
\aad \qquad\qquad -
      D_x \rho(X^\e(u),\al^\e(u))\gamma(X^\e(u),\al^\e(u),z) ] \nu(dz)\Big\}du\  \hbox{ is a martingale.}\earray\end{equation}
      Moreover,
\begin{equation}\label{dyn-again}\barray
\ad \ex^\e_t \rho(X^\e(t+s)) -\rho(X^\e(t))\\
\aad \ =  \ex^\e_t\int^{t+s}_t\Big\{ {1\over 2}\sg^2(X^\e(u),\al^\e(u))D^2_x\rho(X^\e(u)) \\
\aad \qquad \ +   \e \int_{\rr_{0}}[\rho(X^\e(u)+ \gamma(X^\e(u),\al^\e(u),z),\al^\e(u))-\rho(X^\e(u),
\al^\e(u))\\
\aad \qquad \hfill -
      D_x \rho(X^\e(u),\al^\e(u))\gamma(X^\e(u),\al^\e(u),z) ] \nu(dz)\Big\}du  \ \hbox{ a.s.}\earray\end{equation}

Choose an arbitrary real-valued, bounded and continuous function $\ph\cd$,   positive integer $\kappa$,  $t>0$, $s>0$,
and $t_l \le t$ with $l \le \kappa$, we have
\begin{equation}\label{mg-1}\barray \ad \ex \ph(X^\e(t_l): l\le \kappa)[ \rho(X^\e(t+s))- \rho(X^\e(t))]\\
\aad \ \to    \ex \ph(X(t_l): l\le \kappa)[ \rho(X(t+s))- \rho(X(t))] \ \hbox{ as } \e\to 0\earray\end{equation}
by the weak convergence and the Skorohod representation, together with the continuity of $\ph\cd$ and $\rho\cd$.
Next, in view of the moment bounds of the jump term, the weak convergence, and the Skorohod representation, we also have
\begin{equation}\label{mg-2}\barray
\ad \ex \ph(X^\e(t_l): l\le \kappa)\Big\{\int^{t+s}_t \e \int_{\rr_{0}}[\rho(X^\e(u)+ \gamma(X^\e(u),\al^\e(u),z),\al^\e(u))-\rho(X^\e(u),
\al^\e(u))\\
\aad \qquad \hfill -
      D_x \rho(X^\e(u),\al^\e(u))\gamma(X^\e(u),\al^\e(u),z) ] \nu(dz)du\Big\}\\
\aad \ \to 0 \ \hbox{ as } \ \e \to 0.\earray\end{equation}

Dividing $[t,t+s]$ into subintervals of length $\Delta_\e= \e^{(1-\Delta)/2}$ for some $0<\Dl<1$.
We have
\begin{equation}\label{div}\barray\ad \int^{t+s}_t \sg^2(X^\e(u),\al^\e(u)) D^2_x \rho(X^\e(u)) du \\
\ad \ = \sumd \!\intl\! \sg^2(X^\e(t+l\Dl^\e),\al^\e(u)) D^2_x \rho(X^\e(t+l\Dl^\e)) du
  +  e^\e_1(t+s,t), 
\earray\end{equation}
where
\begin{equation}\label{mg-2a}\barray \ad\!\! e^\e_1(t+s,t)= \!\sumd \!\intl\!\! \Big[\sg^2(X^\e(t+l\Dl^\e),\al^\e(u)) [D^2_x \rho(X^\e(u))- D^2_x \rho(X^\e(t+l\Dl^\e))]  \\ \aad \hfill  + [\sigma^2(X^\e(u),\al^\e(u))- \sigma^2(X^\e(t+ l \Delta^\e),\al^\e(u))] D^2_x \rho(X^\e(u)) \Big]du.
\earray\end{equation}
Using the boundedness and smoothness of $\rho\cd$, the Lipschitz continuity of $\sg(\cdot, \al)$,
and \eqref{mg-2a}, it is easily seen that
\begin{equation}\label{mg-3}
\ex \ph(X^\e(t_l): l\le \kappa)  e^\e_1(t+s,t)
 \to 0 \ \hbox{ as } \ \e\to 0.\end{equation}
Concentrating on the term in the second line of \eqref{div}, we have
\bea \ad \ex \ph(X^\e(t_l): l\le \kappa)\Big[
\sumd \intl \sg^2(X^\e(t+l\Dl^\e),\al^\e(u)) D^2_x \rho(X^\e(t+l\Dl^\e)) du\Big]\\
\aad \ = \ex \ph(X^\e(t_l): l\le \kappa)
\Big[\sumd \ett\intl  \sg^2(X^\e(t+l\Dl^\e),\al^\e(u))\\
\aad \ \hfill \times D^2_x \rho(X^\e(t+l\Dl^\e)) du\Big]\\
\aad \ =  \ex \ph(X^\e(t_l): l\le \kappa)
\Big[\sumd \sum_{i=1}^m \sg^2(X^\e(t+l\Dl^\e),i) D^2_x \rho(X^\e(t+l\Dl^\e))\\
\aad \quad \hfill \times \ett\intl [I_{\{\al^\e(u)=i\}} - \nu_i] du \Big]\\
\aad \ \hfill +  \ex \ph(X^\e(t_l): l\le \kappa)
\Big[\sumd \sum_{i=1}^m \sg^2(X^\e(t+l\Dl^\e),i) D^2_x \rho(X^\e(t+l\Dl^\e)) \nu_i \Dl^\e\Big].
\eea
Using similar estimates as in \cite[Lemma 5.35 (a)]{YinZ13},
\bea \disp \l\ett\intl [I_{\{\al^\e(u)=i\}} - \nu_i] du \r
\ad   \le K \Big[\ett \Big| \intl [I_{\{\al^\e(u)=i\}} - \nu_i] du\Big|^2\Big]^{1/2}\\
\ad  \le  O(\sqrt \e).\eea
By the boundedness of $D^2_x \rho\cd$, the linear growth condition of $\sg(\cdot, i)$, and
the estimate of the second moment $\ex\sup_{0\le t\le T}|X^\e(t)|^2<\infty,$ we have
\begin{equation}\label{mg-4}\barray
\ad \ex \l\sumd \sg^2(X^\e(t+l\Dl^\e),i) D^2_x \rho(X^\e(t+l\Dl^\e)) \ett\intl [I_{\{\al^\e(u)=i\}} - \nu_i] du \r \\
\aad \le K {1\over \Dl^\e} \sqrt \e \le K \e^{\Dl/2} \to 0 \ \hbox{ as } \ \e\to 0.\earray\end{equation}
Next, we obtain
\begin{equation}\label{mg-5}\barray\ad
\ex \ph(X^\e(t_l): l\le \kappa)
\Big[\sumd \sum_{i=1}^m \sg^2(X^\e(t+l\Dl^\e),i) D^2_x \rho(X^\e(t+l\Dl^\e)) \nu_i \Dl^\e\Big]\\
\aad \ \to \ex\ph(X(t_l): l\le \kappa)
\Big[\int^{t+s}_t \sum_{i=1}^m  \sg^2(X(u),i) D^2_x \rho(X(u)) \nu_i du \Big] \ \hbox{ as } \ \e\to 0.\earray\end{equation}

Combining \eqref{mg-1}--\eqref{mg-5}, we obtain that
\bea \ad
\ex \ph(X(t_l): l\le \kappa)\Big[\rho(X(t+s)-\rho(X(t))- \int^{t+s}_t {1\over 2}\sum_{i=1}^m  \sg^2(X(u),i)\nu_i D^2_x \rho(X(u))  du\Big]=0.\eea
That is,
$$\rho(X(t))- \int^t_0 {1\over 2} \lbar \sg^2(X(u)) D^2_x \rho(X(u)) du \ \hbox{ is a martingale.}$$
Equivalently, $X\cd$ is a solution of the martingale problem with operator
$$L \rho(x)= {1\over 2} \lbar \sg(x) D^2_x\rho(x).$$ The desired result thus follows.
\end{proof}

\subsection{Arcsine Laws}\label{sect-arcsine-law}
The   arcsine law due to  \cite{Khasm99} is concerned with null recurrent diffusions.
Such null recurrent Markov processes were studied in details in \cite{K80} and
refined and more verifiable conditions were given in \cite{KY00}.
Note that a necessary and sufficient condition for the one-dimensional diffusion process
$X(t)$ given in \eqref{lim-sw} to be null recurrent is
$$ 2\int^\infty_{-\infty}p(x) dx = \pm\infty, \text{ where } p(x)= \frac{1}{\lbar \sigma^2(x)}.$$

\begin{itemize}
\item[(A3)] The following conditions hold:
\begin{itemize}

\item[$\bullet$] $\lbar \sg^2(x) \ge \sg_0>0$ for all $x\in \rr$,
 and for some $p_{\pm} \in \rr$,
\beq{ave-p} \lim_{L\to \pm \infty} {1\over L} \int^L_0 p(x) dx  = p_{\pm};\eeq
\item[$\bullet$]
for a bounded and piecewise continuous function $f: \rr \mapsto \rr$, there exist
$f_+ \not = f_-$ such that
\beq{ave-f} \lim_{L \to \pm \infty}{\disp\int^L_0 f(x) p(x) dx \over \disp \int^L_0 p(x) dx} = f_{\pm}.\eeq
\end{itemize}
\end{itemize}
Now let us state the  arcsine law given in \cite{Khasm99}.

\begin{lem}\label{arc}
Assume {\rm (A3)}. Consider \eqref{lim-sw} with $\lbar \sg(x)$ given in \eqref{sg-bar-def} and
define
$$ \eta(T)= {\disp {1\over T} \int^T_0 f(X(t)) dt -f_- \over \disp  f_+-f_-} .$$
Then the following results hold.
\begin{itemize}
\item[{\rm(i)}] When $p_+=p_-$, the limiting distribution is the arcsine law
\beq{arc-law} \lim_{T\to\infty} \pr (\eta(T) < z)={2\over \pi} \arcsin \sqrt z, \  z \in [0,1].\eeq
\item[{\rm(ii)}] When $p_+\not = p_-$, the limiting distribution coincides with the distribution of
a random variable $\Xi$ such that for all $z>0$ and $A= \sqrt {p_+/p_-}$,
\beq{law-1} \ex {1\over z +\Xi} ={\sqrt {1+z} + A \sqrt z \over \sqrt { (1+z)z} (\sqrt z + A \sqrt { 1+z})}.\eeq
The distribution of $\Xi$ is uniquely determined by \eqref{law-1}.
\end{itemize}
\end{lem}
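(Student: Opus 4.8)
\medskip

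\noindent\textbf{Proof proposal.} The statement is the arcsine law of \cite{Khasm99} quoted verbatim, so in the paper it is invoked rather than reproved; the argument I would reconstruct runs as follows. The plan is to realize the driftless diffusion $X\cd$ as a time‑changed Brownian motion, to rewrite the occupation‑time ratio defining $\eta(T)$ as a ratio of spatial integrals against a Brownian local‑time field, and then to pass to the limit using the Ces\`aro averages \eqref{ave-p}--\eqref{ave-f} together with P.\ L\'evy's classical arcsine law recalled at the beginning of this section.

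Since \eqref{lim-sw} has no drift, $X\cd$ is in natural scale, so $X(t)=B(\lan X\ran_t)$ for a standard Brownian motion $B$, with $\lan X\ran_t=\int_0^t\lbar\sg^2(X(s))\,ds\ge\sg_0 t\to\infty$; in particular $X$ is recurrent, so $\ell^a_T\to\infty$ for every $a$. Writing $p=1/\lbar\sg^2$ (hence $0<p\le\sg_0^{-1}$) and applying the occupation times formula to the semimartingale $X$, one gets, for every $T>0$, the exact identities $\int_0^T f(X(t))\,dt=\int_{\rr}f(a)p(a)\ell^a_T\,da$ and $T=\int_{\rr}p(a)\ell^a_T\,da$, where $\ell^a_T$ is the local time of $X$ at level $a$ up to time $T$ (equivalently that of $B$ run on the time‑changed clock). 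Hence
\[
 \eta(T)=\frac{1}{f_+-f_-}\left(\frac{\int_{\rr}f(a)p(a)\ell^a_T\,da}{\int_{\rr}p(a)\ell^a_T\,da}-f_-\right).
\]
By Brownian scaling the local‑time field spreads on the spatial scale $\sqrt{\lan X\ran_T}$. Splitting $\rr$ into a bounded inner block (whose contribution to both integrals is $o(\lan X\ran_T)$ because Lebesgue‑a.e.\ point is not a zero of $B$), the bulk at scale $\sqrt{\lan X\ran_T}$ (where one approximates the continuous weight by step functions in the rescaled variable and uses \eqref{ave-p}--\eqref{ave-f} in the form $L^{-1}\int_0^L p\to p_\pm$ and $L^{-1}\int_0^L fp\to f_\pm p_\pm$ as $L\to\pm\infty$), and a far tail (negligible in the ratio since $p\le\sg_0^{-1}$ while the denominator stays bounded away from $0$), one obtains that the above ratio converges in distribution to $\bigl(f_+p_+\Lambda+f_-p_-(1-\Lambda)\bigr)/\bigl(p_+\Lambda+p_-(1-\Lambda)\bigr)$, where $\Lambda:=\int_0^1 I_{\{B(u)>0\}}\,du$ has the arcsine law. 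A short simplification then gives $\eta(T)\to\Xi:=p_+\Lambda/\bigl(p_+\Lambda+p_-(1-\Lambda)\bigr)$ in distribution.

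It remains to identify the law of $\Xi$. If $p_+=p_-$ then $\Xi=\Lambda$, whose distribution function is $\frac{2}{\pi}\arcsin\sqrt z$, which is \eqref{arc-law}. If $p_+\ne p_-$, set $A=\sqrt{p_+/p_-}$, so $\Xi=A^2\Lambda/(A^2\Lambda+1-\Lambda)$; then, using the arcsine density $\dfrac{1}{\pi\sqrt{y(1-y)}}$ on $(0,1)$ together with the substitution $y=\sin^2\theta$ and the elementary identity $\int_0^{\pi/2}(a+b\sin^2\theta)^{-1}\,d\theta=\frac{\pi}{2}\bigl(a(a+b)\bigr)^{-1/2}$, one evaluates
\[
 \ex\frac{1}{z+\Xi}=\int_0^1\frac{(A^2-1)y+1}{\bigl(A^2(z+1)-z\bigr)y+z}\cdot\frac{dy}{\pi\sqrt{y(1-y)}}
\]
and checks that it equals the right‑hand side of \eqref{law-1}; since the law of the $[0,1]$‑valued variable $\Xi$ is determined by this Cauchy transform (equivalently by its moments), the characterization is unique. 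The main obstacle is the convergence step: one must establish the \emph{joint} convergence of numerator and denominator to functionals of the \emph{same} Brownian arcsine variable $\Lambda$, control the random time change $\lan X\ran_T$ and the bulk averaging of $fp$ against the merely H\"older‑$\tfrac{1}{2}$ local‑time profile uniformly as $T\to\infty$, and accommodate the piecewise continuity and boundedness of $f$ together with the unboundedness of the spatial domain; this is the technical heart of \cite{Khasm99}, whose argument I would follow for the details.
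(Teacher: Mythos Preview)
You are correct that the paper does not prove this lemma: it is introduced with the sentence ``Now let us state the arcsine law given in \cite{Khasm99}'' and is quoted as an external result, with no argument supplied. Your reconstruction---representing the driftless diffusion as a time-changed Brownian motion, rewriting the time averages via the occupation-times formula as spatial integrals against the local-time field, and passing to the limit through the Ces\`aro hypotheses \eqref{ave-p}--\eqref{ave-f} to land on a functional of L\'evy's arcsine variable $\Lambda$---is indeed the line of argument in Khasminskii's paper, and your identification of $\Xi = p_+\Lambda/(p_+\Lambda + p_-(1-\Lambda))$ together with the Cauchy-transform computation for \eqref{law-1} is the right way to finish. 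Since the paper itself offers no proof, there is nothing further to compare; your sketch matches the source the paper cites.
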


\lemref{arc} presents an arcsine law for the limiting diffusion process.
We further obtain a limiting distributional result.
By virtue of  \thmref{ep-lim}, $X^\e\cd$ converges weakly to $X\cd$
in $D[0,\infty)$ (that is,
 for any $T <\infty$,
 $X^\e\cd$ converges weakly to $X\cd$
in $D[0,T]$).
Furthermore, using perturbed Lyapunov function argument (see \cite[Chapter 4]{Kushner84}), it is not difficult to
demonstrate that $\{X^\e(t): t>0, \ \e>0\}$ is tight.
Define
\bea \ad  \eta(t)= {\disp {1\over t} \int^t_0 f(X(s)) dt -f_- \over \disp  f_+-f_-}, \ \ t \in [ 0,T],\eea
and
\bea \ad  \eta^\e(t)= {\disp {1\over t} \int^t_0 f(X^\e(s)) dt -f_- \over \disp  f_+-f_-},\  \ t \in [ 0,T].\eea
Since $f\cd$ is a bounded and piecewise continuous function,
\thmref{ep-lim} and \cite[Corollary 5.2, p.31]{Billingsley-68} yield that $f(X^\e\cd)$
converges weakly to $f(X\cd)$ in $D[0,T]$ for any $T<\infty$. Thus,
$\eta^\e\cd$ converges weakly to $\eta\cd$.
By \lemref{arc} together with the tightness of $\{X^\e(t):$ $t>0$,  $\e>0\}$,
$\eta(T)$ as  random variables converge  in distribution to a random variable $\wdh \eta$ such that either
(i) or (ii) in \lemref{arc} holds.
We summarize this into the following result.

\begin{prop}\label{lim-d-T} Under the conditions of \thmref{ep-lim} and \lemref{arc},
\begin{itemize}
\item[{\rm(i)}] when $p_+=p_-$,
\beq{-eq2-arc-law} \lim_{T\to\infty} \lim_{\e\to 0} \pr (\eta^\e(T) < z)={2\over \pi} \arcsin \sqrt z, \  z \in [0,1],\eeq
\item[{\rm(ii)}] when $p_+\not = p_-$, the limiting distribution
of $\eta^\e(T)$ $($as $\e\to 0$ and then ${T\to\infty})$ is the same as  the distribution of
a random variable $\Xi$ given in \eqref{law-1} for all $z>0$ with $A= \sqrt {p_+/p_-}$.
The distribution of $\Xi$ is uniquely determined by \eqref{law-1}.
\end{itemize}
\end{prop}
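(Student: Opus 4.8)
The plan is to establish \propref{lim-d-T} by an iterated limit: first send $\e\to 0$ using \thmref{ep-lim}, then send $T\to\infty$ using \lemref{arc}. Fix $T\in(0,\infty)$. The first step is to show that $\eta^\e(T)$ converges in distribution to $\eta(T)$ as $\e\to 0$. By \thmref{ep-lim}, $X^\e\cd$ converges weakly in $D([0,T]:\rr)$ to the diffusion $X\cd$ solving \eqref{lim-sw}. Invoking the Skorohod representation theorem, I would realize all these processes on a common probability space so that $X^\e\cd\to X\cd$ a.s.; since the limit $X\cd$ has continuous paths, this convergence is in fact uniform on $[0,T]$.

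Next I would push the convergence through the occupation functional $x\cd\mapsto \frac1T\int_0^T f(x(s))\,ds$. Because $f$ is bounded and piecewise continuous, its discontinuity set $\Delta_f$ is at most countable; and because $\lbar\sg^2\ge\sg_0>0$ under (A3), the one-dimensional diffusion $X\cd$ is non-degenerate, so $X(s)$ has an absolutely continuous law for each $s>0$ and, by Fubini, $\ex\int_0^T I_{\{X(s)\in\Delta_f\}}\,ds=0$; hence a.s.\ $X(s)\notin\Delta_f$ for Lebesgue-a.e.\ $s\in[0,T]$. At every such $s$, continuity of $f$ at $X(s)$ together with $X^\e(s)\to X(s)$ gives $f(X^\e(s))\to f(X(s))$, and since $f$ is bounded, dominated convergence yields $\frac1T\int_0^T f(X^\e(s))\,ds\to\frac1T\int_0^T f(X(s))\,ds$ a.s.; this is precisely the content of \cite[Corollary 5.2, p.31]{Billingsley-68}. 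Applying the continuous affine map $y\mapsto (y-f_-)/(f_+-f_-)$ then gives $\eta^\e(T)\to\eta(T)$ a.s., so $\eta^\e(T)$ converges in distribution to $\eta(T)$; in particular $\lim_{\e\to 0}\pr(\eta^\e(T)<z)=\pr(\eta(T)<z)$ at every continuity point $z$ of the law of $\eta(T)$.

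Finally I would let $T\to\infty$. By \lemref{arc} applied to the limiting diffusion $X\cd$, the random variables $\eta(T)$ converge in distribution as $T\to\infty$: in case (i) ($p_+=p_-$) the limit law is the arcsine law, whose distribution function $\frac2\pi\arcsin\sqrt z$ is continuous on $[0,1]$, so $\pr(\eta(T)<z)\to\frac2\pi\arcsin\sqrt z$ for every $z\in[0,1]$; in case (ii) ($p_+\ne p_-$) the limit is the law of $\Xi$ characterized by \eqref{law-1}. Combining with the previous paragraph gives $\lim_{T\to\infty}\lim_{\e\to 0}\pr(\eta^\e(T)<z)=\frac2\pi\arcsin\sqrt z$ in case (i) and the stated characterization in case (ii); the tightness of $\{X^\e(t):t>0,\ \e>0\}$, which follows from a perturbed-Lyapunov-function estimate as in \cite[Chapter 4]{Kushner84}, is what makes the iterated limits well posed.

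The step I expect to be the main obstacle is the handling of the piecewise continuity of $f$: one must rule out that the limiting diffusion spends a positive amount of time at the discontinuities of $f$, so that the occupation functional $x\cd\mapsto\int_0^T f(x(s))\,ds$ is a.s.\ continuous with respect to the law of $X\cd$. This is exactly where non-degeneracy of $\lbar\sg$ is used, and it also explains why the hypothesis $\lbar\sg^2\ge\sg_0>0$ in (A3) cannot be dispensed with.
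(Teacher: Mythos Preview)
Your proposal is correct and follows essentially the same route as the paper: weak convergence of $X^\e\cd$ to $X\cd$ via \thmref{ep-lim}, passage through the occupation functional using \cite[Corollary 5.2, p.31]{Billingsley-68}, and then \lemref{arc} for the $T\to\infty$ limit, with tightness supplied by the perturbed-Lyapunov argument of \cite[Chapter 4]{Kushner84}. Your treatment is in fact more detailed than the paper's, since you spell out why the piecewise continuity of $f$ causes no trouble (the non-degeneracy $\lbar\sg^2\ge\sg_0>0$ forces the limiting diffusion to spend zero time on the discontinuity set of $f$), a point the paper leaves implicit in its citation of Billingsley.
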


\subsection{There Is No $L_2$ Convergence}\label{sect-L2-convergence}
We have established weak convergence in \thmref{ep-lim}. One natural question is: Can we get a stronger convergence in the sense of
$L_2$? The following example gives a negative answer to the question.

\begin{exm}\label{example1}
Consider a regime-switching diffusion
\begin{equation}
\label{ex-sw-diffusion}
dX^\e(t)= \sigma(\alpha^\e(t)) dW(t), \ \ X^\e(0)=x \in \rr,
\end{equation}
where $\sigma(1)= \sigma_1 \not= \sigma(2)= \sigma_2 $, and $\alpha^\e\in \set{1,2}$ is a continuous-time Markov chain  generated by $Q/\e$ with $Q=\begin{pmatrix}-q_1 & q_1\\ q_2 & -q_2 \end{pmatrix}$ and $q_1, q_2 >0$.   By virtue of Theorem \ref{ep-lim}, $X^\e$ converges weakly to $X$, the solution to
$$dX(t)=  \lbar \sigma dW(t), \ \ X(0)= x\in \rr,$$
where $\lbar \sigma = \sqrt{\sg_1^2 \nu_1 + \sg^2_2 \nu_2}$, and $\nu_1 = \frac{q_2}{q_1+ q_2} $, $\nu_2 = \frac{q_1}{q_1+ q_2} $. Note that
\begin{displaymath}
\ex_{x,1}  \abs{X^\e(t) - X(t)}^2 = \ex_{x,1} \abs{\int_0^t \left[ \sigma(\al^\e(u)) -\lbar \sigma \right]  dW(u)}^2  =
 \ex_{x,1}\int_0^t \abs{\sigma(\al^\e(u)) -\lbar \sigma}^2 du.
\end{displaymath}
Using the Kolmogorov forward equation, one can find that
\begin{displaymath}
\pr_1\set{\al^\e(u) =1} = 1- \pr_1\set{\al^\e(u) =2} = \nu_1 + \nu_2 e^{-(q_1+ q_2) u /\e}.
\end{displaymath}
Then it follows that
\begin{displaymath}
\begin{aligned}
\ex_1  \abs{\sigma(\al^\e(u)) -\lbar \sigma}^2  & = \abs{\sigma_1 -\lbar \sigma}^2\pr_1 \set{\al^\e(u) =1} + \abs{\sigma_2 -\lbar \sigma}^2\pr_1 \set{\al^\e(u) =2}  \\
& = \abs{\sigma_1 -\lbar \sigma}^2 (\nu_1 + \nu_2  e^{-(q_1+ q_2) u /\e}) + \abs{\sigma_2 -\lbar \sigma}^2(\nu_2 -\nu_2 e^{-(q_1+ q_2) u /\e})\\
 & = 2\lbar \sigma [\lbar \sigma -(\sigma_1\nu_1+ \sigma_2\nu_2)] - (\sigma_1  -\sigma_2)(\sigma_1 + \sigma_2 + 2\lbar \sigma) \nu_2 e^{-(q_1+ q_2) u /\e}.
\end{aligned}
\end{displaymath}
Consequently we have
\begin{displaymath}
\begin{aligned}
\lefteqn{ \ex_{x,1}   \abs{X^\e(t) - X(t)}^2 } \\ &\ \  = 2\lbar \sigma [\lbar \sigma -(\sigma_1\nu_1+ \sigma_2\nu_2)]  t - (\sigma_1  -\sigma_2)(\sigma_1 + \sigma_2 + 2\lbar \sigma)(1- e^{-(q_1+ q_2) t /\e}) \frac{\nu_2 \e}{q_1+ q_2}  \\
&\ \  \to  2\lbar \sigma [\lbar \sigma -(\sigma_1\nu_1+ \sigma_2\nu_2)]  t, \text{ as } \e\to 0.
\end{aligned}\end{displaymath}
Note that  under the condition $\sigma_1 \not =\sigma_2$, one can immediately  verify that $\lbar \sigma -(\sigma_1\nu_1+ \sigma_2\nu_2) > 0$ and hence $ \ex_{x,1}   \abs{X^\e(t) - X(t)}^2 \not\to 0$ as $\e\to 0$.
\end{exm}

\section{Further Remarks}\label{sec:conclu}
This paper has been devoted to revealing the connections of regime-switching jump diffusions with a class of coupled
systems of partial integro-differential    equations.
Under broad conditions, we have obtained serval versions of the Feynman-Kac formulas together with
the associated initial, terminal, and boundary value problems.
Moreover, certain limiting results have been
obtained for processes with fast switching. In addition,  arcsine laws for a limiting process enables us
to draw conclusion for certain systems with two-time-scale formulation.

In this work, the jump part is driven  by  a Poisson random measure associated with a L\'evy process.
 A worthwhile future effort is to treat systems in which
the random driving force is  an alpha-stable process that has finite
$p$th moment with $p<2$. This requires more work and careful consideration.
In this paper, when the   switching component $\al$ switches at time $\tau$ or $\al(\tau) \not= \al(\tau-)$, it is assumed
the $X$ component
is fixed or $X(\tau)= X(\tau-)$.  A relevant question is: Can we allow the $X$ component to take place from $x$ in one plane to $y$ in another
plane at the instant of a switching?
 Mathematically, the switching part will also be represented by an integral operator as in the formulation of \cite{il1999asymptotic}. This adds another fold of
difficulty.  With the aid of the Feynman-Kac formulas,  we may proceed to treat many stochastic control problems.
For example,
combining real options, game theory, and a regime-switching formulation with jumps, we may consider
an irreversible investment problem with Stackelberg leader-follower competition and  market regime changes (see the related work
using switching diffusion formulation \cite{BeHYY14}).  The treatment of the real options and the related problems with competition
have received resurgent attention lately.
Furthermore, effort may also be directed to applications and extensions to ratchet theory for molecular motor,
 and stochastic dynamics of electrical membrane with voltage-dependent ion channel fluctuations.
All of these will be worthwhile future efforts.

\def\cprime{$'$}

\end{document}